\DeclareMathOperator{\Dom}{dom} 
\DeclareMathOperator{\barcone}{bar} 
\DeclareMathOperator{\Sol}{Sol} 
\DeclareMathOperator{\ri}{ri}
\DeclareMathOperator{\co}{co} 
\DeclareMathOperator{\Int}{int} 
\DeclareMathOperator{\aff}{aff}
\DeclareMathOperator{\dst}{d}
\DeclareMathOperator{\rec}{rec}
\DeclareMathOperator{\barc}{bar}
\def\bbar{\overline}
\def\R{\mathbb{R}}
\def\N{\mathbb{N}}
\def\<{\langle}
\def\>{\rangle}
\def\ds{\displaystyle}
\def\longto{\longrightarrow}
\begin{document}

\title{Separation of Convex Sets via Barrier Cones
\thanks{Dedicated to Professor Hoang Tuy.}
}


\author{Huynh The Phung}


\institute{Huynh The Phung \at
             Department of Mathematics, Hue College of Sciences, Hue University, Vietnam\\
              Tel.: +84-234-3822407\\
              \email{huynhthephung@gmail.com}   
}

\date{Received: 12 June 2018 / Accepted: 23 April 2020}

\maketitle

\begin{abstract}
A closed convex subset of a normed linear space is said to have the strong separation property if it can be strongly separated from every other  disjoint closed and convex set  by a closed hyperplane. In this paper we  give some results on the separation of convex sets with noticing the role of barrier cones, develop some characterizations of  subsets having the  strong separation property, and apply them to consider a class of convex optimization problems.
\keywords{Convex set \and separation theorem \and barrier cone \and recession cone \and set having the strong separation property.}
\subclass{46A55 \and 46B20 \and 52A05}
\end{abstract}

\section{Introduction}
\label{intro}

Let $C$ and $D$ be convex subsets of a  real normed linear space $X$ with dual space $X^*$. 
If there exists $x^* \in X^*\setminus \{0\}$ such that
$$\sup\{ \<x^*,c\>\mid c\in C\}  \le \inf\{\<x^*,d\> \mid d\in D\},$$ 
then we say that  $C$ and $D$ are separated.   
Furthermore, if   
$$\sup\{ \<x^*,c\>\mid c\in C\}< \inf\{\<x^*,d\> \mid d\in D\},$$
then $C$ and $D$ are said to be strongly separated.

A convex subset of\, $X$ is said to have the (strong) separation property if it can be (strongly) separated from every other  disjoint closed convex subset.

Let $C$ be a closed convex subset of $X$. We denote by $\rec(C)$ and $\barc(C)$, respectively,  the recession cone and the barrier cone of $C$, i.e.,
$$\rec(C):=\{v\in X\mid c+v\in C,\, \forall\, c\in C\};$$
$$\barc(C):=\{x^*\in X^*\mid \sigma_C(x^*)<+\infty\},$$
where $\sigma_C: X^*\to \bbar \R$ is the support function of $C$, defined by
$$\sigma_C(x^*)=\sup\{\<x^*, c\>: \, c\in C\},\; x^*\in X^*.$$ 

The set $C$ is called \emph{linearly bounded} if $\rec(C)=\{0\}$. It is obvious that a bounded subset is also linearly bounded. The set  
$C$ is said to be \emph{locally compact} if there exist  $c_0\in C$ and $r>0$ such that
\begin{equation}\label{eq2} 
 \bbar{B(c_0; r)}\cap C \text{ is compact},\end{equation}
where $\bbar{B(c_0; r)}$ denotes the closed ball of radius $r$ around $c_0$. 
It should be noted that, since $C$ is convex and closed, this definition does not depend on both $c_0$ and $r$, i.e., if \eqref{eq2} holds, then for every $c\in C$ and $s>0$,
$\bbar{B(c; s)}\cap C$  is also compact. 

The following results are well known (see, for instance, \cite{JD66,EkeTe,KN,K,R,Taylor}) in convex analysis.
\begin{theorem}\label{thm1} Let $C$ and $D$ be disjoint convex subsets of $X$. Then they are separated if at least one of the following conditions holds.

$(a)$ $\Int(C)\cup \Int(D)\neq \emptyset;$

$(b)$ $\dim(X)<\infty.$
\end{theorem}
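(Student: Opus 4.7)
The plan is to reduce both parts to the geometric Hahn--Banach theorem: a nonempty open convex set in $X$ that does not contain $0$ can be separated from $\{0\}$ by some $x^*\in X^*\setminus\{0\}$. The standard proof of that fact translates the set to a neighbourhood of the origin, takes its Minkowski gauge, and extends a linear functional via the analytic Hahn--Banach theorem; continuity of the extension is automatic because the gauge is bounded above by a multiple of $\|\cdot\|$.

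For part $(a)$, without loss of generality take $\Int(C)\neq\emptyset$ and set $U:=\Int(C)-D$. Writing $U=\bigcup_{d\in D}(\Int(C)-d)$ shows $U$ is open, and it is convex as a Minkowski difference of convex sets. The assumption $C\cap D=\emptyset$ forces $0\notin U$, since $0\in U$ would supply $c\in\Int(C)\subseteq C$ and $d\in D$ with $c=d$. The geometric Hahn--Banach theorem then produces $x^*\in X^*\setminus\{0\}$ with $\langle x^*,u\rangle\le 0$ for every $u\in U$, which rearranges to $\langle x^*,c\rangle\le\langle x^*,d\rangle$ for all $c\in\Int(C)$ and $d\in D$. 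Taking suprema and infima, together with the standard inclusion $C\subseteq\overline{\Int(C)}$ valid for convex sets with nonempty interior, extends the inequality to all of $C$ and gives the required separation.

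For part $(b)$ the interior hypothesis can fail, but finite-dimensionality supplies a substitute: every nonempty convex subset of $\R^n$ has nonempty relative interior. I would set $V:=C-D$, a convex set with $0\notin V$, and work inside its affine hull $L$. There $\ri(V)$ is a relatively open, convex, nonempty subset of $L$ avoiding $0$, so the argument of $(a)$ applied in the finite-dimensional ambient space $L$ yields a nonzero linear $\ell:L\to\R$ with $\ell(v)\le 0$ for all $v\in V$. Extending $\ell$ by any linear extension to all of $X$ (automatically continuous because $\dim X<\infty$) and unwinding $V=C-D$ produces the separating functional on $X$.

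The main obstacle is the geometric Hahn--Banach step in $(a)$, which is precisely where closedness of $C$ and $D$ plays no role while openness of $\Int(C)$ is decisive: one must verify that the Minkowski gauge of $U-u_0$ for a chosen $u_0\in U$ is a sublinear functional dominated by a multiple of $\|\cdot\|$, so that the extended linear functional is continuous. The other delicate point is the density step $C\subseteq\overline{\Int(C)}$, which rests on the segment argument: for any $c\in C$ and $c_0\in\Int(C)$, every point on the half-open segment $[c_0,c)$ lies in $\Int(C)$, hence $c$ is a limit of interior points. Once these two facts are in place, the proof in a normed space is structurally identical to its finite-dimensional counterpart.
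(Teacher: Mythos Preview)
The paper does not supply its own proof of this theorem; it is listed among the ``well known'' results and attributed to the references \cite{JD66,EkeTe,KN,K,R,Taylor}. Your argument is the standard textbook proof one finds in those sources, so there is nothing to compare against and your approach is exactly the expected one.

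The write-up is essentially correct. Two small points are worth tightening. First, in part~(b) you pass to $L=\aff(V)$ and ``apply the argument of (a) in $L$''; this is fine when $0\in L$ so that $L$ is a linear subspace, but if $0\notin L$ you should say one extra word (for instance, any hyperplane in $X$ containing the proper affine set $L$ already separates $V$ from $\{0\}$, or simply translate and track the constant). Second, what you call a ``Minkowski difference'' is really the Minkowski sum $\Int(C)+(-D)$; the convexity claim is of course unaffected. Neither issue is a genuine gap.
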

\begin{theorem}\label{thm4}  Let $C$ and $D$ be the convex subsets of $X$. The following statements are equivalent.

$(a)$ $C$ and $D$ are strongly separated;

$(b)$ $\dst(C;D):=\inf\{\|c-d\|\mid c\in C,\, d\in D\}>0.$
\end{theorem}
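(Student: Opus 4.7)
The plan is to prove the two implications separately. The direction $(a)\Rightarrow(b)$ is essentially unpacking the definition, while $(b)\Rightarrow(a)$ will be reduced to the interior case of Theorem~\ref{thm1} by ``fattening'' the Minkowski difference into an open convex set that avoids the origin.

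For $(a)\Rightarrow(b)$, suppose $x^*\in X^*\setminus\{0\}$ and reals $\alpha<\beta$ satisfy $\langle x^*,c\rangle\le\alpha<\beta\le\langle x^*,d\rangle$ for all $c\in C$ and $d\in D$. Then $\langle x^*,d-c\rangle\ge\beta-\alpha$, and combining this with $|\langle x^*,d-c\rangle|\le\|x^*\|\cdot\|d-c\|$ forces $\|d-c\|\ge(\beta-\alpha)/\|x^*\|$ for every admissible pair. Passing to the infimum yields $\dst(C;D)\ge(\beta-\alpha)/\|x^*\|>0$.

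For $(b)\Rightarrow(a)$, set $\delta:=\dst(C;D)>0$ and consider $E:=C-D+B(0;\delta)$, where $B(0;\delta)$ denotes the \emph{open} ball of radius $\delta$. Then $E$ is convex (a sum of convex sets), open (a sum with an open set), and does not contain $0$: an equality $0=c-d+b$ with $\|b\|<\delta$ would yield $\|c-d\|<\delta$, contradicting the definition of $\delta$. Apply Theorem~\ref{thm1}(a) to the disjoint convex sets $\{0\}$ and $E$, the latter having nonempty interior because it is open: there exists $x^*\in X^*\setminus\{0\}$ with $\langle x^*,e\rangle\ge 0$ for every $e\in E$. Writing $e=c-d+b$ and using $\sup\{-\langle x^*,b\rangle:\|b\|<\delta\}=\delta\|x^*\|$, we get $\langle x^*,c\rangle-\langle x^*,d\rangle\ge\delta\|x^*\|$ for all $c\in C$, $d\in D$, whence $\sup_{d\in D}\langle x^*,d\rangle+\delta\|x^*\|\le\inf_{c\in C}\langle x^*,c\rangle$. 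After possibly replacing $x^*$ by $-x^*$, this is precisely the strong separation of $C$ and $D$.

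I do not expect serious difficulty in executing this plan. The only subtleties are to use a strict inequality $\|b\|<\delta$ in the definition of $E$ (to secure both openness of $E$ and $0\notin E$ simultaneously), and to compute the supremum of $-\langle x^*,b\rangle$ over the open ball correctly, which is what turns the weak Hahn--Banach inequality into a strict functional gap of size $\delta\|x^*\|$. Because Theorem~\ref{thm1}(a) is valid in arbitrary normed spaces, the whole argument requires no dimensionality assumption.
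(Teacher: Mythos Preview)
Your argument is correct. Both directions are handled cleanly: the forward implication is the obvious norm estimate, and for the converse you fatten $C-D$ by an open $\delta$-ball to manufacture an open convex set missing the origin, then invoke Theorem~\ref{thm1}(a) and recover the strict gap $\delta\|x^*\|$ by optimizing over $b$. The only cosmetic point is that the sign convention forces the replacement of $x^*$ by $-x^*$, which you already note; alternatively you could have formed $E=D-C+B(0;\delta)$ to land directly on the paper's ordering $\sup_C\langle x^*,\cdot\rangle<\inf_D\langle x^*,\cdot\rangle$.

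As for comparison with the paper: there is nothing to compare against. Theorem~\ref{thm4} is stated in the introduction as a well-known result and attributed to the standard references \cite{JD66,EkeTe,KN,K,R,Taylor}; the paper does not supply its own proof. Your proof is the classical one (reduce to Eidelheit-type separation of an open convex set from a point via the Minkowski difference), so it is entirely in line with what those references contain.
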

\begin{theorem}\label{thm3} Let $C$ and $D$ be disjoint convex subsets in $X$. If 
\begin{equation}\label{eq3}
C \text{ or } D \text{ is weakly compact,}
\end{equation}
and the other is closed, then they are strongly separated.
\end{theorem}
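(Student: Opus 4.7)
The plan is to reduce the claim to Theorem \ref{thm4} by showing $\dst(C,D)>0$. Assume without loss of generality that $C$ is weakly compact and $D$ is closed. I would argue by contradiction: if $\dst(C,D)=0$, there exist sequences $\{c_n\}\subset C$ and $\{d_n\}\subset D$ with $\|c_n-d_n\|\to 0$.

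By the Eberlein--\v{S}mulian theorem, the weakly compact set $C$ is weakly sequentially compact, so a subsequence $\{c_{n_k}\}$ converges weakly to some $c_0\in C$. Since $c_{n_k}-d_{n_k}\to 0$ in norm (hence also weakly), $\{d_{n_k}\}$ converges weakly to $c_0$ as well. But $D$, being closed and convex, is weakly closed by Mazur's theorem, so $c_0\in D$, contradicting $C\cap D=\emptyset$. Hence $\dst(C,D)>0$, and Theorem \ref{thm4} delivers the desired strong separation.

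An alternative I would keep in reserve avoids invoking Eberlein--\v{S}mulian: consider the convex Lipschitz function $f(c):=\dst(c,D)$ on $C$. Its sublevel sets are closed and convex, hence weakly closed by Mazur's theorem, so $f$ is weakly lower semicontinuous and attains its infimum on the weakly compact set $C$ at some $c_0$; disjointness then forces $f(c_0)>0$, for otherwise $c_0\in\bbar{D}=D$. The main obstacle in either route is the same---bridging the norm-based distance with the weak topology in which compactness is assumed---and both resolutions ultimately rest on the fact that closed convex subsets of a normed space are weakly closed.
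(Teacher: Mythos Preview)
The paper does not give its own proof of Theorem~\ref{thm3}; it is stated among the ``well known'' results and referred to the literature \cite{JD66,EkeTe,KN,K,R,Taylor}. Your argument is a correct, standard proof: reducing to the distance criterion of Theorem~\ref{thm4}, extracting a weakly convergent subsequence via Eberlein--\v{S}mulian, and using Mazur's theorem to place the weak limit in both $C$ and $D$. The alternative route through the weak lower semicontinuity of $c\mapsto\dst(c,D)$ on the weakly compact set $C$ is equally valid and is in fact closer in spirit to how such results are often presented in the cited texts (it sidesteps sequential compactness altogether). There is nothing further to compare, since the paper supplies no proof of its own.
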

\begin{corollary}\label{cor4} Let $C$ and $D$ be disjoint closed convex subsets of a reflexive Banach space $X$.
If  one of the sets is bounded, then they are strongly separated.
\end{corollary}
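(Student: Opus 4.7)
The plan is to reduce the corollary to Theorem \ref{thm3}, whose hypothesis requires weak compactness of one of the two sets. The key bridging fact is the classical characterization of reflexivity: in a reflexive Banach space, every closed bounded convex subset is weakly compact. This is a standard consequence of the Banach--Alaoglu theorem (the closed unit ball of $X^{**}$ is weak-* compact), combined with the reflexivity identification $X \cong X^{**}$, which makes the closed unit ball of $X$ weakly compact; closed bounded convex sets are then weakly compact because, by Mazur's theorem, closed convex sets coincide with weakly closed sets.

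Concretely, I would proceed as follows. Without loss of generality assume $C$ is the bounded one among $C$ and $D$. Pick any $r>0$ so that $C \subset \bbar{B(0;r)}$. Since $X$ is reflexive, $\bbar{B(0;r)}$ is weakly compact. Because $C$ is convex and norm-closed, Mazur's theorem gives that $C$ is weakly closed, hence $C$ is a weakly closed subset of the weakly compact ball $\bbar{B(0;r)}$, and therefore $C$ itself is weakly compact. Now the hypotheses of Theorem \ref{thm3} are satisfied: $C$ and $D$ are disjoint convex subsets, $C$ is weakly compact, and $D$ is closed. Applying that theorem yields strong separation of $C$ and $D$.

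There is no real obstacle here beyond invoking the right structural fact about reflexive spaces. The only subtlety worth flagging is the use of Mazur's theorem to pass from norm closedness of the convex set $C$ to weak closedness, which is what allows boundedness in norm plus reflexivity to upgrade to weak compactness. Everything else is a direct application of the already-established Theorem \ref{thm3}.
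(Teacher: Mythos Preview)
Your proof is correct and matches the paper's intended approach: the paper states this result without explicit proof as an immediate corollary of Theorem~\ref{thm3}, and the bridge you supply---that in a reflexive Banach space a closed bounded convex set is weakly compact (via Banach--Alaoglu and Mazur)---is precisely the standard argument implicit in that placement. There is nothing to add.
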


\begin{theorem}\label{thm2} Let $C$ and $D$ be disjoint closed convex subsets satisfying \begin{equation}\label{eq5}
\rec(C)\cap \rec(D)=\{0\}.
\end{equation}
If, in addition, $C$ or $D$ is locally compact, then they are strongly separated.
\end{theorem}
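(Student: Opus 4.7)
The plan is to apply Theorem~\ref{thm4} by establishing $\dst(C;D)>0$. Without loss of generality assume $C$ is locally compact, with $c_0\in C$ and $r>0$ such that $\bbar{B(c_0;r)}\cap C$ is compact. Suppose for contradiction that $\dst(C;D)=0$, and choose sequences $c_n\in C$, $d_n\in D$ with $\|c_n-d_n\|\to 0$.

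The first step is to dispose of the easy case in which $\{c_n\}$ is bounded: then $\{c_n\}$ lies in some ball $\bbar{B(c_0;M)}$, and by the remark following \eqref{eq2} the set $\bbar{B(c_0;M)}\cap C$ is compact. Extracting a convergent subsequence $c_{n_k}\to c^*\in C$ and using $\|c_{n_k}-d_{n_k}\|\to 0$ with the closedness of $D$, I obtain $c^*\in C\cap D$, contradicting disjointness.

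The heart of the proof is the unbounded case. After passing to a subsequence, $t_n:=\|c_n-c_0\|\to\infty$; set $v_n:=(c_n-c_0)/t_n$. For $t_n\ge r$, convexity places the point $c_0+rv_n$ on the segment $[c_0,c_n]\subset C$, hence in the compact set $\bbar{B(c_0;r)}\cap C$. A subsequence therefore yields $v_n\to v^*$ with $\|v^*\|=1$. For any fixed $\rho>0$, convexity gives $c_0+\rho v_n\in C$ eventually, so $c_0+\rho v^*\in C$ by closedness; a convex combination of $c_0+(t/\lambda)v^*$ with an arbitrary $c\in C$ at weight $\lambda\to 0^+$, combined with closedness, upgrades this to $v^*\in\rec(C)$.

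To complete the contradiction I need $v^*\in\rec(D)$ as well. Fix any $d_0\in D$. Since $\|c_n-d_n\|/t_n\to 0$, the vectors $(d_n-c_0)/t_n$ converge to $v^*$, and then $(d_n-d_0)/t_n\to v^*$ and $\|d_n-d_0\|/t_n\to 1$; consequently $u_n:=(d_n-d_0)/\|d_n-d_0\|\to v^*$. For $\rho>0$ the point $d_0+\rho u_n$ lies on $[d_0,d_n]\subset D$ once $\|d_n-d_0\|\ge \rho$, so in the limit $d_0+\rho v^*\in D$; the same convexity-plus-closedness upgrade then gives $v^*\in\rec(D)$, contradicting $\rec(C)\cap\rec(D)=\{0\}$. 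The main obstacle is the extraction of the common unit direction $v^*$ from the unbounded sequence: this is precisely where local compactness of $C$ is essential, since in infinite-dimensional $X$ one cannot otherwise compensate for the failure of closed bounded sets to be (weakly) compact.
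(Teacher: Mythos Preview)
Your argument is correct, but note that the paper does \emph{not} supply its own proof of Theorem~\ref{thm2}: it is listed among the ``well known'' results and attributed to the literature (in particular Dieudonn\'e \cite{JD66}). So there is no in-paper proof to compare against; what you have written is precisely the classical Dieudonn\'e argument.

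It is worth observing that the paper later isolates the two key mechanisms of your proof as standalone lemmas for use in Theorem~\ref{thm6}: your passage from $c_n/\|c_n\|\to v^*$ (or its shifted variants) to $v^*\in\rec(C)$ is the content of Lemma~\ref{lem2}, and your transfer of the same limiting direction from $(c_n)$ to $(d_n)$ via $\|c_n-d_n\|\to 0$ is Lemma~\ref{lem3}. The only place your proof genuinely differs from that framework is the compactness step: you extract a \emph{norm}-convergent direction $v^*$ with $\|v^*\|=1$ by pushing $c_0+rv_n$ into the compact slice $\bbar{B(c_0;r)}\cap C$, whereas Theorem~\ref{thm6} uses reflexivity to get weak convergence and then needs the barrier-cone hypothesis to rule out $v^*=0$. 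That contrast is exactly the point the paper is making in Remark~\ref{rem1} about why local compactness and condition \eqref{eq4} play parallel roles.

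Two minor presentational points. First, your case split should be phrased as ``$(c_n)$ has a bounded subsequence'' versus ``$\|c_n\|\to\infty$ after passing to a subsequence''; as written, a reader might momentarily wonder about sequences that are neither bounded nor tend to infinity. Second, your ``upgrade'' from $c_0+\rho v^*\in C$ for all $\rho>0$ to $v^*\in\rec(C)$ is correct but compressed; spelling out $(1-\lambda)c+\lambda\bigl(c_0+(t/\lambda)v^*\bigr)\to c+tv^*$ would make it self-contained.
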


It is evident that any closed set in a finite-dimensional space is locally compact. Thus, a locally compact set may still be unbounded, and hence, may not be weakly compact. 

Since a convex set is linearly bounded whenever it is bounded, \eqref{eq5} is much weaker than \eqref{eq3}. Therefore, to compensate for that weakness,  in Theorem \ref{thm2} one of the sets is  required to be locally compact for a strong separation.

\begin{remark} For the strong separation, the condition \eqref{eq5} seems to be essential even in the case of finite-dimensional spaces. Indeed, it is obvious that the following subsets of $\R^2$
\begin{equation}\label{eq7}
C=\big\{(x,y)\in \R^2\mid x>0,\, y\ge \frac 1x\big\}\quad\text{and}\quad D=\{(x,0)\in \R^2\mid x\in \R\}\end{equation}
are convex, closed and disjoint, but are not strongly separated. The reason for this is that:
$$\rec(C)\cap\rec(D)=\{(u, 0)\mid u\ge 0\}\neq \{(0,0)\}.$$
\end{remark} 

\begin{remark}\label{ex1} In Corollary \ref{cor4}, if the underlying space is infinte-dimensional then the boundedness (or weak compactness) hypothesis of one of the subsets cannot be substituted by condition \eqref{eq5}. Indeed, consider the following subsets of the Hilbert space $l^2$:
$$C=\Big\{\xi=(x_n)\in l^2\,\Big|\, \sum_{n=1}^\infty \frac{x_n}n =1;\; x_n\ge 0,\, \forall n\Big\},$$
$$D=\Big\{\zeta=(y_n)\in l^2\,\Big|\, \sum_{n=1}^\infty \frac{y_n}{n+1} =1;\; y_n\ge 0,\, \forall n\Big\}.$$
Obviously, $C$ and $D$ are disjoint unbounded closed convex  subsets of $l^2$. Let $(\xi^k)\subset C$ and $(\zeta^k)\subset D$ be sequences defined by
$$\xi^k=(0,\ldots,0,k^{k-{\text{th}}},0,\ldots);\quad \zeta^k=(\frac 2{k+1},0,\ldots,0,k^{k-{\text{th}}},0,\ldots);\; k\in \N.$$
Since $\|\xi^k-\zeta^k\|_2=\frac 2{k+1}\to 0,$ $C$ and $D$  are not strongly separated. It should be noted that, although being  unbounded, both $C$ and $D$ are  linearly bounded, hence $\rec(C)\cap \rec(D)=\{0\}.$\end{remark}
\begin{remark}\label{rem1} The local compactness assumption on the sets in Theorem~\ref{thm2} seems a bit strong in the case of infinite-dimensional spaces. For example, consider the following subsets of $l^2$:
$$C=\Big\{x=(x_n)\in l^2\mid x_1\ge \Big(\sum_{i\neq 1}x_i^2\Big)^\frac{1}{2}\Big\},$$
$$D=\Big\{x=(x_n)\in l^2\mid x_2\ge 1+2\Big(\sum_{i\neq 2}x_i^2\Big)^\frac{1}{2}\Big\}.$$

Firstly, we have $rec(C)\cap \rec(D)=\{0\}$ because
$$\rec(C)= \Big\{v\in l^2\mid v_1\ge \Big(\sum_{i\neq 1}v_i^2\Big)^\frac{1}{2}\Big\},\; \rec(D)= \Big\{v\in l^2\mid v_2\ge 2\Big(\sum_{i\neq 2}v_i^2\Big)^\frac{1}{2}\Big\}.$$

It is easy to check that $C$ and $D$ are disjoint closed convex sets and are strongly separated by the vector $x^*=(1, -1, 0, 0, \ldots, )\in l^2$. 

On the other hand, by setting $e_1=(1, 0, 0, \ldots, )$, $e_2=(0, 1, 0, 0, \ldots)$ we have $e_1\in \Int C$ and $2e_2\in \Int D$. Thus, $C$ and $D$ are not locally compact, and hence, Theorem \ref{thm2} cannot be applied to establish a strong separation for them.
\end{remark}

Our first aim in this paper is \emph{to develop a new result on the strong separation of convex sets by imposing an assumption on the barrier cones of the sets in place of weak compactness or local compactness assumptions}.

From Theorem~\ref{thm1}, Theorem~\ref{thm3} and Theorem~\ref{thm2}, it follows that if $C$ has a nonempty interior or $X$ is finite-dimensional then $C$ has the separation property, and if $C$ is weakly compact or it is locally compact and linearly bounded then it has the strong separation property. Some further features of subsets having (strong) separation property have been established in the literature (for instance, see \cite{ErThe,GW}). Especially, in the case of Hilbert spaces, we have the  interesting result below. For a convex set $C\subset X$, let $\ri C$ denote its relative interior; that is,
$$\ri C:=\{x\in C\mid \exists\, \epsilon>0, B(x;\epsilon)\cap C\subset \aff(C)\},$$
where $\aff(C)$ is the affine hull of $C$ and $B(x; \epsilon)$ denotes the open ball with radius $\epsilon$ around $x$.

\begin{theorem}{\rm \cite[Theorem 2]{ErThe}}\label{thm10} 
An unbounded closed convex subset $C$ of a Hilbert space $X$ has the separation property if and only if $\aff(C)$ is a finite-codimensional closed affine subspace and $\ri C$ is nonempty.    
\end{theorem}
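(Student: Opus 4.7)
The plan is to treat the two implications separately, reducing in both cases to a finite-dimensional problem via the decomposition $X = L_0 \oplus F$, where $L_0$ is the direction of $\aff(C)$ and $F = L_0^\perp$; the hypothesis that $\aff(C)$ is closed and finite-codimensional is exactly what makes $F$ finite-dimensional, while $\ri C \neq \emptyset$ means $C$ has nonempty interior inside the Hilbert space $L_0$.

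For sufficiency, translate so that $\aff(C) = L_0$. Given a closed convex $D$ disjoint from $C$, I would split into two cases. If $D \cap L_0 = \emptyset$, then $\pi_F(D)$ is a convex subset of the finite-dimensional $F$ not containing $0$; Theorem~\ref{thm1}(b) yields $f^* \in F^*\setminus\{0\}$ with $\inf_{\pi_F(D)} f^* \ge 0$, and $x^* := f^* \circ \pi_F$ vanishes on $C \subset L_0$ and is nonnegative on $D$, giving the separation. If $D \cap L_0 \neq \emptyset$, apply Theorem~\ref{thm1}(a) inside the Hilbert space $L_0$ (using $\ri C = \Int_{L_0} C \neq \emptyset$) to obtain $\ell \in L_0^*$ with $\beta := \sup_C \ell \le \inf_{D\cap L_0}\ell$. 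To extend $\ell$ to an $x^* \in X^*$ that separates $C$ from all of $D$, I would introduce the marginal function $h(f) := \inf\{\ell(v) : v \in L_0,\, v+f \in D\}$. A direct verification gives $\rec(D_f) = \rec(D) \cap L_0$ independently of $f$, and the inequality $h(0) = \inf_{D \cap L_0}\ell \ge \beta$ forces $\ell \ge 0$ on $\rec(D) \cap L_0$; thus $h$ is real-valued and proper convex on $\pi_F(D) \subset F$, with $h(0) \ge \beta$. A finite-dimensional subdifferential/affine-minorant argument then produces $f^* \in F^*$ with $f^*(f) + h(f) \ge \beta$ on $\pi_F(D)$, and $x^* := \ell \circ \pi_{L_0} + f^* \circ \pi_F$ furnishes the required separation in $X$.

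For necessity, I would proceed by contrapositive, constructing a disjoint closed convex $D$ that cannot be separated from $C$ whenever one of the three conditions (closedness of $\aff(C)$, finite codimension of $\aff(C)$, nonemptiness of $\ri C$) fails. If $\ri C = \emptyset$, I would exploit the unboundedness of $C$ inside $\aff(C)$ to construct, in the spirit of Remark~\ref{ex1}, a closed convex $D \subseteq \aff(C)$ whose distance to $C$ vanishes along every candidate continuous linear functional. If $\aff(C)$ is not closed, pick $x_0 \in \overline{\aff(C)}\setminus \aff(C)$ and place a small closed convex set near $x_0$ disjoint from $C$; any separating hyperplane would have to contain $\aff(C)$ and, by continuity, also $x_0$, a contradiction. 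If $\aff(C)$ is closed but of infinite codimension, use an infinite orthonormal sequence in $L_0^\perp$ to build a $D$ that asymptotically touches $C$ in infinitely many independent transverse directions, forcing any continuous separating functional to vanish on $L_0$ and on a dense subspace of $L_0^\perp$, hence to be identically zero.

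The main obstacle is the extension step in the sufficiency argument: the marginal function $h$ need not be lower semicontinuous at boundary points of $\pi_F(D)$, so the concave function $\beta - h$ need not admit a linear majorant through the origin even in the finite-dimensional $F$. Circumventing this rests on using the specific features of our $h$ --- its properness, guaranteed by $\ell \ge 0$ on $\rec(D) \cap L_0$, together with the inequality $h(0) \ge \beta$ --- combined with the finite-dimensional biconjugate theorem. This is precisely where the finite codimension of $\aff(C)$ is indispensable; Remark~\ref{ex1} already shows that without such a hypothesis the separation property may fail even for linearly bounded closed convex subsets of a Hilbert space.
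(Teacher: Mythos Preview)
The paper does not prove Theorem~\ref{thm10}; it is quoted verbatim from \cite[Theorem~2]{ErThe} and used as a black box in Section~\ref{sec:3}. There is therefore nothing in the present paper to compare your proposal against.

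That said, a brief comment on the proposal itself. The sufficiency half follows a natural line, but the extension step has a gap beyond the lower-semicontinuity issue you flag: you assert that $h$ is \emph{real-valued} on $\pi_F(D)$ because $\ell\ge 0$ on $\rec(D)\cap L_0$. This inference is not valid in general. The slice $D_f=\{v\in L_0: v+f\in D\}$ is a closed convex subset of the (typically infinite-dimensional) space $L_0$, and the condition $\ell\ge 0$ on $\rec(D_f)$ does \emph{not} by itself force $\inf_{D_f}\ell>-\infty$; that implication requires something like local compactness or finite dimensionality of $D_f$, neither of which is available here. Without properness of $h$ the biconjugate argument collapses. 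The necessity half is only sketched at the level of intentions; in particular the case $\ri C=\emptyset$ (``in the spirit of Remark~\ref{ex1}'') is the substantive one and the reference to Remark~\ref{ex1} does not by itself supply the construction, since there both sets are linearly bounded while here $C$ is unbounded with empty relative interior. If you want a complete argument you will need to consult \cite{ErThe} directly.
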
  

Our second aim is \emph{to provide some necessary and/or sufficient conditions for a closed convex subset in a normed space to have the strong separation property.}

Recall that  if $M\subset \R^n$ is a nonempty closed convex set and  $f: \R^n \to \bbar R$ is a convex, lower semicontinuous and coercive function, then the optimization problem
$$\mathcal{P}(M; f):\, \begin{cases}
f(x) \to \inf,\\
x\in M
\end{cases}$$
has a nonempty compact solution set.    

The third aim is \emph{to prove a similar result for the convex programming problem with the constraint set having the strong separation property and the coerciveness assumption of the objective function is replaced by a weaker one.}

The rest of the paper is organized as follows: The next section will present a characterization for the interior of the barrier cones of convex sets in normed linear spaces. In Section 3 we develop a new result on strong separation with noticing the role of barrier cones. In Section 4 we provide some conditions for a closed convex set to have the strong separation property. Finally, Section 5 is devoted to considering convex optimization problems with constraint set having the strong separation property. 
\section{A characterization of the interior of the barrier cone}\label{sec:1}
In this section we try to characterize the interior of the barrier cone of a closed convex subset $C$ in a normed linear space $X$. We first note 
 that, since the support function $\sigma_C$ is sublinear and $\sigma_C(0)=0$, the barrier cone of $C$
is a convex cone containing the origin.

With the sets given in \eqref{eq7} we have
$$\barcone(C)=\{(u, v)\in \R^2\mid u\le 0,\; v\le 0\};\quad \barcone(D)=\{(0, v)\mid v\in \R\}.$$
Thus, $\Int\barcone(C)\neq \emptyset$ and $\Int\barcone(D)= \emptyset.$ 

It is well known that the weak$^*$-closure of $\barc(C)$ coincides with the polar cone of $\rec(C)$, i.e., 
$$\bbar{\barc(C)}^*=\rec(C)^0=\{x^*\in X^*\mid \<x^*, v\>\le 0,\, \forall\, v\in \rec(C)\}.$$
If $X$ is a reflexive Banach space then the norm-closure and the weak$^*$-closure of $\barc(C)$ coincide. Thus, we have
$$\bbar{\barc(C)}=\rec(C)^0.$$
However, this relation may fail in a general normed linear space. In \cite{AErThe} the authors have given a complete description of the norm-closure of $\barc(C)$ when  $C$ is a closed convex subset of a normed linear space $X$:
\begin{equation}\label{eq12}
\bbar{\barc(C)}=\left\{x^*\in X^*\,\Big|\, \lim_{r\to \infty}\left(\inf_{c\in C; \<x^*,c\>\ge r}\frac{\|c\|}{r}\right)=\infty\right\}.
\end{equation}
In fact, $\bbar{\barc(C)}$ can be represented in another form as stated below.
\begin{theorem}\label{thm11}
\begin{equation}\label{eq16}
\bbar{\barc(C)}=\left\{x^*\in X^*\,\Big|\, \limsup_{c\in C; \|c\|\to\infty}\frac{\<x^*, c\>}{\|c\|}\le 0\right\}.
\end{equation}
\end{theorem}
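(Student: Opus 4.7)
My plan is to prove \eqref{eq16} by a direct comparison with the already-established representation \eqref{eq12}. Writing $B$ for the right-hand side of \eqref{eq16}, the two conditions describe the same phenomenon in dual parametrizations: \eqref{eq12} fixes a threshold $r$ for $\<x^*,c\>$ and measures how large $\|c\|$ must be, while $B$ fixes a threshold for $\|c\|$ and bounds $\<x^*,c\>$. The proof will consist in swapping these two quantifier patterns using the elementary bound $\<x^*,c\>\le \|x^*\|\|c\|$.

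For $B\subseteq \bbar{\barc(C)}$, I take $x^*\in B$ and verify the limit condition in \eqref{eq12}. If $\sigma_C(x^*)<+\infty$, the infimum in \eqref{eq12} is over the empty set for $r>\sigma_C(x^*)$ and equals $+\infty$ by convention, so there is nothing to prove. Otherwise, given $M>0$, the hypothesis on $x^*$ provides $R>0$ with $\<x^*,c\>\le \|c\|/M$ for every $c\in C$ with $\|c\|\ge R$. Setting $r_0:=\|x^*\|R$, for any $r\ge r_0$ and any $c\in C$ satisfying $\<x^*,c\>\ge r$, the inequality $\|x^*\|\|c\|\ge \<x^*,c\>\ge r_0$ forces $\|c\|\ge R$, whence $\|c\|/r\ge M\<x^*,c\>/r\ge M$. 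Thus the infimum in \eqref{eq12} is at least $M$ for every $r\ge r_0$, and its limit equals $+\infty$.

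The reverse inclusion I argue by contrapositive. If $x^*\notin B$, some $\epsilon>0$ and a sequence $(c_k)\subset C$ satisfy $\|c_k\|\to\infty$ and $\<x^*,c_k\>/\|c_k\|>\epsilon$. Setting $r_k:=\<x^*,c_k\>$, I have $r_k\to+\infty$ and $\|c_k\|/r_k<1/\epsilon$, so the infimum in \eqref{eq12} is at most $1/\epsilon$ at $r=r_k$. This prevents that infimum from tending to $+\infty$ and thus shows $x^*\notin \bbar{\barc(C)}$.

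The only genuine subtlety is isolating the trivial case $\sigma_C(x^*)<+\infty$ before applying the $\limsup$ hypothesis in the forward direction; once that is handled, each inclusion reduces to a short estimate built on $\<x^*,c\>\le \|x^*\|\|c\|$. There is no substantial obstacle beyond bookkeeping, since the equivalence between \emph{$\|c\|/r$ is large} and \emph{$\<x^*,c\>/\|c\|$ is small} is essentially tautological once the correct choice of threshold $r_0$ has been made.
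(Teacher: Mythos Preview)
Your argument is correct and follows essentially the same route as the paper: both reduce \eqref{eq16} to the known representation \eqref{eq12} and establish the equivalence of the two limit conditions for each fixed $x^*$ via the estimate $\langle x^*,c\rangle\le \|x^*\|\,\|c\|$. The only cosmetic differences are that the paper isolates the case $x^*=0$ up front (whereas you absorb it into the case $\sigma_C(x^*)<+\infty$) and proves the direction $\bbar{\barc(C)}\subseteq B$ directly rather than by contrapositive; the choices of thresholds ($r_0=\|x^*\|R$ versus the paper's $N=K\|x^*\|$, etc.) match exactly.
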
  
\begin{proof} We need to show that, for every $x^*\in X^*$, 
\begin{equation}\label{eq25}
\lim_{r\to \infty}\left(\inf_{c\in C; \<x^*,c\>\ge r}\frac{\|c\|}{r}\right)=\infty \Leftrightarrow \limsup_{c\in C; \|c\|\to\infty}\frac{\<x^*, c\>}{\|c\|}\le 0.
\end{equation}
Since both sides of the relation hold for $x^*=0$ we may assume $x^*\neq 0$ and prove that the statements below are equivalent:
\begin{align*}
&(i) \quad  \forall\, M>0, \, \exists\, N>0,\, \forall\, r\ge N,\, \forall\, c\in C, \<x^*, c\>\ge r \Rightarrow \frac{\|c\|}r>M;\\
&(ii) \quad \forall\, \epsilon>0, \, \exists\, K>0,\, \forall\, c\in C, \|c\|\ge K \Rightarrow \frac{\<x^*, c\>}{\|c\|}<\epsilon.
\end{align*}
\noindent ($i \Rightarrow ii$). For every $\epsilon>0$ we set $M=\frac{1}{\epsilon}$. Then there exists $N>0$ satisfying ($i$). Let $K=\frac{N}{\epsilon}>0$. For every $c\in C$ such that $\|c\|\ge K$, by letting $r:=\<x^*, c\>$ we have:

$\bullet$ If  $\<x^*, c\>=r\ge N$ then $\frac{\|c\|}{\<x^*, c\>}=\frac{\|c\|}{r}>M$ and hence $\frac{\<x^*, c\>}{\|c\|}<\epsilon.$

$\bullet$ If $\<x^*, c\>< N$ then $\frac{\<x^*, c\>}{\|c\|}<\frac{N}{K}=\epsilon.$

\noindent ($ii \Rightarrow i$). For every $M>0$ we set $\epsilon=\frac 1M>0$ again. Then there exists $K>0$ satisfying ($ii$). Let now $N=K\|x^*\|$. For every $r\ge N$ and $c\in C$ such that $\<x^*, c\>\ge r$, we have
$$K\|x^*\|=N\le r\le \<x^*, c\>\le \|x^*\|\|c\|.$$
This shows that $\|c\|\ge K$, which, by ($ii$),  implies $\frac{r}{\|c\|}\le \frac{\<x^*, c\>}{\|c\|}<\epsilon$ and hence $\frac{\|c\|}{r}>M.$
\end{proof}

Inspired by this result we derive a characterization for the interior of $\barc(C)$ as below:
\begin{equation}\label{eq17}
\Int \barc(C)=\left\{x^*\in X^*\,\Big|\, \limsup_{c\in C; \|c\|\to\infty}\frac{\<x^*, c\>}{\|c\|}< 0\right\}.
\end{equation}
We state this fact in the following result.
\begin{theorem}\label{thm8} Let $x^*\in \barcone(C)$.  The following statements are equivalent:

$(a)$ $x^* \in \Int \barcone(C)$;

$(b)$ There exists $\gamma>0$ such that
\begin{equation}\label{eq27}
\sup_{c\in C\setminus B(0;\gamma)}\<x^*, c\>< \sigma_C(x^*);
\end{equation}

$(c)$  There exist positive numbers $\alpha$, $R$ such that
\begin{equation}\label{eq29}
\<x^*, c\> \le -\alpha \|c\|,\; \forall\, c\in C\setminus B(0;R);
\end{equation} 

$(d)$ $\ds \limsup_{c\in C; \|c\|\to\infty}\frac{\<x^*, c\>}{\|c\|}< 0,$

\noindent where, $B(0; \gamma)$ and $B(0; R)$ denote, respectively, the open balls of radii $\gamma$ and $R$ around the origin.  
\end{theorem}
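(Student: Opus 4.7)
The plan is to split the equivalences as follows: (c) $\Leftrightarrow$ (d) follows at once from the definition of $\limsup$, so I will prove (a) $\Leftrightarrow$ (d), (c) $\Rightarrow$ (b), and the decisive (b) $\Rightarrow$ (d), which closes every remaining cycle.

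The easy half (d) $\Rightarrow$ (a) is a direct estimate: given $\alpha, R > 0$ with $\langle x^*, c\rangle \le -\alpha\|c\|$ for all $c \in C$ with $\|c\| \ge R$, then for any $y^* \in X^*$ with $\|y^*\| < \alpha/2$ the inequality $\langle x^* + y^*, c\rangle \le -(\alpha/2)\|c\| \le 0$ holds on $C \cap \{\|c\| \ge R\}$, while on $C \cap B(0;R)$ one has $\langle x^* + y^*, c\rangle \le (\|x^*\| + \alpha/2)R$; hence $\sigma_C(x^* + y^*) < \infty$, so $x^* \in \Int \barc(C)$. Similarly, for (c) $\Rightarrow$ (b) one just chooses $\gamma$ so large that $-\alpha\gamma < \sigma_C(x^*)$, which immediately gives $\sup_{c \in C \setminus B(0;\gamma)} \langle x^*, c\rangle \le -\alpha\gamma < \sigma_C(x^*)$.

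For (a) $\Rightarrow$ (d), I will use that $\sigma_C$ is sublinear and lower semicontinuous (as a pointwise supremum of norm-continuous linear functionals). Taking $\epsilon > 0$ with $B(x^*;\epsilon) \subset \barc(C)$, the Baire category theorem applied to the complete space $\bbar{B(x^*;\epsilon/2)} = \bigcup_n \bigl(\{\sigma_C \le n\} \cap \bbar{B(x^*;\epsilon/2)}\bigr)$ yields $y^*_0$ and $\eta > 0$ with $\sigma_C \le n$ on $B(y^*_0;\eta) \subset B(x^*;\epsilon/2)$. Since $2x^* - y^*_0 \in B(x^*;\epsilon) \subset \Dom \sigma_C$, the identity $x^* + w^* = \tfrac12(y^*_0 + 2w^*) + \tfrac12(2x^* - y^*_0)$ combined with the convexity of $\sigma_C$ upgrades this to a uniform bound $\sigma_C \le M$ on $B(x^*;\eta/2)$. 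Taking the supremum of $\langle y^*, c\rangle$ over $\|y^*\| < \eta/2$ in the inequality $\langle x^* + y^*, c\rangle \le M$ then produces $\langle x^*, c\rangle + (\eta/2)\|c\| \le M$ for every $c \in C$, which yields (d).

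The main obstacle will be (b) $\Rightarrow$ (d): (b) only asserts that $\langle x^*, c\rangle$ remains a fixed gap below $\sigma_C(x^*)$ outside $B(0;\gamma)$, while (d) demands strictly negative \emph{linear} decay as $\|c\| \to \infty$. To span this gap, set $L = \sup_{c \in C \setminus B(0;\gamma)} \langle x^*, c\rangle$ and $\delta = \sigma_C(x^*) - L > 0$, and pick $c_0 \in C$ with $\langle x^*, c_0\rangle \ge \sigma_C(x^*) - \delta/2$; the strict inequality forces $\|c_0\| < \gamma$. For any $c \in C$ with $\|c\| > 3\gamma$, put $t = 3\gamma/\|c\| \in (0,1)$; then $c_t := (1-t)c_0 + tc \in C$ satisfies $\|c_t\| \ge t\|c\| - (1-t)\|c_0\| > 2\gamma$, so (b) forces $\langle x^*, c_t\rangle \le L$. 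Expanding $\langle x^*, c_t\rangle = (1-t)\langle x^*, c_0\rangle + t\langle x^*, c\rangle$ and solving yields $\langle x^*, c\rangle \le (\sigma_C(x^*) - \delta/2) - \delta\|c\|/(6\gamma)$, whence $\limsup_{\|c\|\to\infty} \langle x^*, c\rangle/\|c\| \le -\delta/(6\gamma) < 0$. The subtlety is the scaling $t = 3\gamma/\|c\|$: it is precisely the ratio at which $c_t$ lands just outside $B(0;\gamma)$, so that the fixed gap $\delta$ furnished by (b) is amplified into linear decay in $\|c\|$.
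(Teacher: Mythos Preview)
Your proof is correct. The convex-combination step you use for $(b)\Rightarrow(d)$ is exactly the engine of the paper's $(b)\Rightarrow(c)$: pick $c_0\in C\cap B(0;\gamma)$ near the supremum, slide along the segment $[c_0,c]$ until you exit $B(0;\gamma)$, and read off linear decay from the fixed gap. Your constants ($t=3\gamma/\|c\|$, decay rate $\delta/(6\gamma)$) differ cosmetically from the paper's ($\lambda$ determined by $\|u\|=\gamma$, rate $\varepsilon/(4\gamma)$), but the idea is identical. Likewise your $(d)\Rightarrow(a)$ matches the paper's $(c)\Rightarrow(a)$ verbatim.

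The one genuine divergence is how you leave $(a)$. The paper proves $(a)\Rightarrow(b)$ by contradiction: if $(b)$ fails, a sequence $(c_n)\subset C$ with $\|c_n\|\to\infty$ and $\langle x^*,c_n\rangle\to\sigma_C(x^*)$ exists, and Banach--Steinhaus (applied to the $c_n$ acting on $X^*$) produces $u^*$ with $\limsup\langle u^*,c_n\rangle=\infty$, whence $x^*+\lambda u^*\notin\barc(C)$ for $\lambda>0$. You instead prove $(a)\Rightarrow(d)$ directly by showing $\sigma_C$ is bounded on a ball around $x^*$: Baire category on $\bbar{B(x^*;\epsilon/2)}$ gives a small ball where $\sigma_C\le n$, and a reflection-plus-convexity step transports this bound to $B(x^*;\eta/2)$; then $\sup_{\|y^*\|<\eta/2}\langle x^*+y^*,c\rangle\le M$ unwinds to $\langle x^*,c\rangle\le M-(\eta/2)\|c\|$. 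Both routes exploit completeness of $X^*$, but yours has the advantage of being constructive (you exhibit the linear decay rate $\eta/2$) and of recovering the standard fact that a lower semicontinuous convex function on a Banach space is locally bounded on the interior of its domain; the paper's route is shorter and avoids the reflection trick.

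One cosmetic point: in your $(c)\Rightarrow(b)$ you should also take $\gamma\ge R$, so that the hypothesis of $(c)$ actually applies to every $c\in C\setminus B(0;\gamma)$.
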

\begin{proof} Since the equivalence between $(c)$ and $(d)$ is rather obvious, we only need to prove $(a) \Rightarrow (b) \Rightarrow (c) \Rightarrow (a)$.

$(a) \Rightarrow (b)$. Suppose that \eqref{eq27} fails to hold for every $\gamma>0$, or equivalently,
\begin{equation}\label{eq28}
\sup_{c\in C\setminus B(0;\gamma)}\<x^*, c\>=\sigma_C(x^*), \; \forall\, \gamma>0.
\end{equation}
Then there is a sequence $(c_n)\subset C$ such that $\|c_n\|\to \infty$ and 
$$\lim_{n\to\infty} \<x^*,c_n\>=\sigma_C(x^*).$$
Since the sequence $(c_n)$ is unbounded, by virtue of Banach-Steinhaus theorem, there exists $u^* \in X^*$ such that 
$$\limsup_{n\to\infty}\<u^*,c_n\>=\infty.$$
It implies that
$$\sigma_C(x^*+\lambda u^*)\ge \limsup_{n\to \infty} \<x^*+\lambda u^*,c_n\>=\infty,\; \forall\, \lambda>0.$$
In other words, $x^*+\lambda u^*\not\in \barcone(C)$, for every $\lambda>0.$ Thus, $x^*\not\in \Int \barcone(C).$ 

$(b) \Rightarrow (c)$ By \eqref{eq27} there exists $c_0\in C\cap B(0;\gamma)$ such that, for some $\varepsilon>0$, 
$$\sup_{c\in C\setminus B(0;\gamma)}\<x^*,c\>< \<x^*,c_0\>-\varepsilon.$$
Choose $R$ large enough such that $R> \gamma$ and $\<x^*,c_0\> - \frac{\varepsilon}{4\gamma}R\le 0.$ We shall prove that \eqref{eq29} holds for such $R$ and $\alpha:=\frac{\varepsilon}{4\gamma}$. 

Take $c\in C\setminus B(0; R)$ arbitrarily. Since $\|c\|\ge R>\gamma>\|c_0\|$, there exists $\lambda\in (0,1)$ such that $\|u\|=\gamma$ with $u=\lambda c+(1-\lambda)c_0\in C.$ We have
$$\gamma=\|\lambda c+(1-\lambda)c_0\|\ge \lambda\|c\|-(1-\lambda)\|c_0\|,$$
which implies that
\begin{equation}\label{eq30}
\lambda\le \frac{\gamma+\|c_0\|}{\|c\|+\|c_0\|}\le \frac{2\gamma}{\|c\|}.
\end{equation}
Since $u \in C\setminus B(0;\gamma)$, we have
$$\<x^*,c_0\>-\varepsilon> \<x^*,u\>=\lambda\<x^*,c\>+(1-\lambda)\<x^*,c_0\>,$$
which together with \eqref{eq30} implies that
$$\varepsilon< \frac{2\gamma}{\|c\|}(\<x^*,c_0\>-\<x^*,c\>),$$
or, 
$$\<x^*,c\>\le  -\frac{\varepsilon}{2\gamma}\|c\|
+\<x^*,c_0\>.$$
Noting that $\|c\|\ge R$ and $-\frac{\varepsilon}{4\gamma}R+\<x^*,c_0\>\le 0$, we have
$$\<x^*,c\>\le -\frac{\varepsilon}{4\gamma}\|c\|-\frac{\varepsilon}{4\gamma}R
+\<x^*,c_0\>\le -\alpha\|c\|.$$

$(c) \Rightarrow (a)$  If  \eqref{eq29} fulfills, then for every $u^*\in B(x^*;\alpha)$, we have
$$\<u^*,c\>\le \<x^*,c\>+\|u^*-x^*\|\|c\|\le \<x^*,c\>+\alpha\|c\|\le 0,\; \forall\, c\in C\setminus B(0;R),$$
and hence, 
$$\sigma_C(u^*)\le \max\{0, \sigma_{C\cap B(0;R)}(u^*)\}\le R\|u^*\|<\infty.$$
Thus, $B(x^*;\alpha)\subset \barcone(C)$, from which $(a)$ follows.
\end{proof}
\begin{corollary} $C$ is bounded if and only if $\barcone(C)=X^*$.
\end{corollary}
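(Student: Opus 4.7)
The plan is to settle this as a direct corollary of Theorem \ref{thm8}, using $x^* = 0$ as the pivotal point.

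The forward direction is immediate: if $C$ is bounded with $\sup_{c \in C} \|c\| =: \rho < \infty$, then for every $x^* \in X^*$ we have $\sigma_C(x^*) \le \rho \|x^*\| < \infty$, so $x^* \in \barcone(C)$. Hence $\barcone(C) = X^*$.

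For the reverse direction, suppose $\barcone(C) = X^*$. The origin always belongs to $\barcone(C)$, and since $\barcone(C)$ now equals the whole dual space, $0 \in \Int \barcone(C)$. The plan is to apply the implication $(a) \Rightarrow (c)$ of Theorem \ref{thm8} to $x^* = 0$: this yields positive constants $\alpha$ and $R$ such that
$$0 = \langle 0, c \rangle \le -\alpha \|c\|, \quad \forall\, c \in C \setminus B(0; R).$$
Since $\alpha > 0$, the inequality $0 \le -\alpha \|c\|$ forces $\|c\| = 0$; in particular no $c \in C$ can have norm strictly greater than $R$. Therefore $C \subset \bbar{B(0; R)}$, i.e., $C$ is bounded.

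There is no genuine obstacle; the one subtlety to flag is simply that Theorem \ref{thm8} admits $x^* = 0$ as a legitimate element of $\barcone(C)$, so the machinery applies. (Alternatively, one can bypass Theorem \ref{thm8} and argue directly via Banach--Steinhaus, as in the proof of $(a) \Rightarrow (b)$ there: an unbounded sequence $(c_n) \subset C$ would yield some $u^* \in X^*$ with $\limsup_n \langle u^*, c_n \rangle = \infty$, contradicting $u^* \in \barcone(C)$. But the route through Theorem \ref{thm8} is shorter and highlights that this corollary is the $x^* = 0$ instance of the preceding characterization.)
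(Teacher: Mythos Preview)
Your proof is correct and follows essentially the same approach as the paper: both hinge on applying Theorem \ref{thm8} at the point $x^*=0$. The paper phrases both directions simultaneously via the cone property (``$\barcone(C)=X^*$ iff $0\in\Int\barcone(C)$'') and invokes condition $(b)$ of Theorem \ref{thm8}, whereas you split the two directions and use condition $(c)$ for the reverse one; these are minor stylistic differences, not a different route.
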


\begin{proof} Since  $\barcone(C)$ is a cone, $\barcone(C)=X^*$ if and only if $0 \in \Int\barcone(C)$. On the other hand, it follows from Theorem \ref{thm8} that $0\in \Int\barcone(C)$ if and only if there exists $\gamma>0$ such that $C\setminus B(0;\gamma)=\emptyset$, or equivalently, $C$ is bounded. 
\end{proof}
\section{Separation theorems via recession cone and barrier cone}\label{sec:2}
As we have seen in Theorem \ref{thm2}, for the strong separation of  unbounded subsets, besides condition \eqref{eq5}, the assumption of local compactness is also required. In the following discussion, instead of using local compactness assumption on the sets, we require one of their barrier cones to have a nonempty interior. The main result of the section is stated below.
\begin{theorem}\label{thm6} Let $C$ and $D$ be disjoint closed convex subsets of a reflexive Banach space, satisfying \eqref{eq5}. If, in addition, 
\begin{equation}\label{eq4}
(\Int \barcone(C))\cup (\Int \barcone(D))\neq \emptyset,
\end{equation} then $C$ and $D$ are strongly separated.
\end{theorem}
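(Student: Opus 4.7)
The plan is to argue by contradiction using Theorem~\ref{thm4}: if $C$ and $D$ are not strongly separated, then $\dst(C,D)=0$, so there exist sequences $(c_n)\subset C$ and $(d_n)\subset D$ with $\|c_n-d_n\|\to 0$. I will fight this assumption by splitting into the bounded and unbounded cases, exploiting reflexivity in both, and using the interior assumption on the barrier cone only in the unbounded case.

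Without loss of generality suppose $x^*\in \Int\barc(C)$; by Theorem~\ref{thm8}(c) there exist $\alpha>0$ and $R>0$ such that $\<x^*,c\>\le -\alpha\|c\|$ for every $c\in C\setminus B(0;R)$. In the easy case where $(c_n)$ (equivalently $(d_n)$) is bounded, I pass to a subsequence that converges weakly — reflexivity guarantees this — and then use that closed convex sets are weakly closed (Mazur). The weak limits $c\in C$ and $d\in D$ must coincide because $c_n-d_n\to 0$ strongly, giving a point in $C\cap D$, contradicting disjointness.

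The substantive case is when $(c_n)$ is unbounded, which forces $(d_n)$ to be unbounded as well via the triangle inequality and $\|c_n-d_n\|\to 0$. After extracting a subsequence with $\|c_n\|\to\infty$, I set $v_n:=c_n/\|c_n\|$; this is a bounded sequence in a reflexive space so, up to another subsequence, $v_n\rightharpoonup v$ for some $v$ with $\|v\|\le 1$. The standard argument that $v\in\rec(C)$ goes as follows: for any fixed $c_0\in C$ and any $t>0$, once $\|c_n\|\ge t$ the convex combination $c_0+(t/\|c_n\|)(c_n-c_0)$ lies in $C$ and converges weakly to $c_0+tv$, which therefore belongs to $C$ (being weakly closed). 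To see $v\in\rec(D)$, note $\|d_n\|/\|c_n\|\to 1$ and $d_n/\|c_n\|=v_n-(c_n-d_n)/\|c_n\|\rightharpoonup v$ because the correction term vanishes in norm; hence $d_n/\|d_n\|\rightharpoonup v$ as well, and the same recession argument applies to $D$.

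The final ingredient — and the point where the assumption $\Int\barc(C)\neq\emptyset$ becomes essential — is to rule out $v=0$. Applying $\<x^*,\cdot\>$ to $v_n$ for $\|c_n\|\ge R$ yields $\<x^*,v_n\>\le -\alpha$, and weak convergence gives $\<x^*,v\>\le -\alpha<0$, so $v\neq 0$. This produces a nonzero element of $\rec(C)\cap\rec(D)$, contradicting \eqref{eq5} and completing the proof. The main obstacle is exactly this last step: without a barrier-cone interior point one cannot preclude $v=0$, and the recession argument would collapse — this is precisely the phenomenon illustrated in Remark~\ref{ex1}, showing why condition \eqref{eq4} is the right replacement for weak compactness or local compactness.
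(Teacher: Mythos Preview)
Your proof is correct and follows essentially the same route as the paper: contradiction via $\dst(C,D)=0$, the bounded/unbounded dichotomy handled by reflexivity, weak-limit recession directions in the unbounded case, and the use of Theorem~\ref{thm8}(c) to force the weak limit to be nonzero. The only cosmetic difference is that the paper isolates the facts ``$c_n/\|c_n\|\rightharpoonup u$ implies $u\in\rec(C)$'' and ``$\|c_n-d_n\|$ bounded implies $d_n/\|d_n\|$ has the same weak limit'' as separate lemmas (Lemmas~\ref{lem2} and~\ref{lem3}), whereas you inline these arguments directly.
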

Before proceeding to the proof we prove the following lemmas.
\begin{lemma}\label{lem2} Let $C$ be a closed convex subset of $X$ and $(c_n)$ be a sequence in $C$ such that $\|c_n\|\to \infty$ and $$\frac{c_n}{\|c_n\|}\overset w {\longrightarrow} u\in X.$$
Then $u\in \rec(C)$.
\end{lemma}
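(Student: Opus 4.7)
The plan is to apply the definition of recession cone directly: fix an arbitrary $c\in C$ and show $c+u\in C$, using a convex combination of $c_n$ and $c$ to manufacture a sequence in $C$ that weakly converges to $c+u$.

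\medskip

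\textbf{Step 1 (construct the approximating sequence).} Since $\|c_n\|\to\infty$, for all sufficiently large $n$ we have $\lambda_n:=1/\|c_n\|\in (0,1)$. By convexity of $C$,
$$
x_n := \lambda_n c_n+(1-\lambda_n)c\in C.
$$

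\textbf{Step 2 (identify the weak limit).} Observe that $\lambda_n c_n=c_n/\|c_n\|\overset{w}{\longrightarrow} u$ by hypothesis, while $(1-\lambda_n)c\to c$ in norm (hence weakly). Therefore $x_n\overset{w}{\longrightarrow} c+u$.

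\medskip

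\textbf{Step 3 (pass to the limit inside $C$).} Because $C$ is convex and norm-closed, it is weakly closed by Mazur's theorem; consequently $c+u\in C$. Since $c\in C$ was arbitrary, this shows $u\in\rec(C)$ in the sense of the definition given in the introduction.

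\medskip

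There is no serious obstacle: the argument is essentially the standard ``scaling trick'' used to produce recession directions. The only point worth flagging is that the conclusion genuinely needs Mazur's theorem (so that weak limits of convex combinations from $C$ remain in $C$), but this is automatic for norm-closed convex subsets of any normed space, so no additional hypothesis on $X$ beyond what is implicit in the paper's setting is required. Note that the same argument with $\lambda_n:=t/\|c_n\|$ gives $c+tu\in C$ for every $t\ge 0$, recovering the classical form of the recession cone, but the weaker conclusion $c+u\in C$ is all that the paper's definition asks for.
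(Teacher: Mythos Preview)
Your proof is correct and is essentially identical to the paper's own argument: the paper also fixes $c\in C$, forms the convex combination $v_n:=(1-\tfrac{1}{\|c_n\|})c+\tfrac{1}{\|c_n\|}c_n\in C$, observes that $v_n\overset{w}{\longrightarrow} c+u$, and concludes $c+u\in C$ since closed convex sets are weakly closed. Your $x_n$ is exactly the paper's $v_n$, and the remaining steps match line for line.
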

\begin{proof} Take $c\in C$ we prove that $c+u\in C$. Since $\|c_n\|\to \infty$, 
$$v_n:=\Big(1-\frac{1}{\|c_n\|}\Big)c+ \frac{1}{\|c_n\|}c_n \in C,$$
for $n$ large enough (such that $1<\|c_n\|$). On the other hand, $(v_n)$ weakly converges to $c+u$. By noting that a closed convex set is also weakly closed, we deduce $c+u\in C.$ Since this inclusion holds for every $c\in C$,  it follows that $u\in \rec(C)$.   
\end{proof}
\begin{lemma}\label{lem3} Let $(c_n)$ and $(d_n)$ be sequences in $X$ such that
$\|c_n\|\to \infty$, and for some $r>0$, $\|c_n-d_n\|\le r$ for every $n$. If 
$$\frac{c_n}{\|c_n\|} \longto u, \text{ or } \frac{c_n}{\|c_n\|} \overset{w}\longto u,$$
with $u\in X$, then 
$$\frac{d_n}{\|d_n\|} \longto u, \text{ or } \frac{d_n}{\|d_n\|} \overset{w}\longto u, \text{ respectively.}$$
\end{lemma}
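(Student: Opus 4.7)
The plan is to reduce everything to the observation that the two sequences become asymptotically parallel at the same rate. First I would note that $\|d_n\|\to\infty$: indeed, from $\|c_n-d_n\|\le r$ and the reverse triangle inequality we get
$$\bigl|\|c_n\|-\|d_n\|\bigr|\le r,$$
so $\|d_n\|\ge \|c_n\|-r\to\infty$, and moreover $\|c_n\|/\|d_n\|\to 1$.

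Next, I would rewrite
$$\frac{d_n}{\|d_n\|}=\frac{\|c_n\|}{\|d_n\|}\cdot\frac{d_n}{\|c_n\|}=\frac{\|c_n\|}{\|d_n\|}\left(\frac{c_n}{\|c_n\|}-\frac{c_n-d_n}{\|c_n\|}\right).$$
The error term $(c_n-d_n)/\|c_n\|$ has norm at most $r/\|c_n\|\to 0$, hence converges to $0$ in norm (and a fortiori weakly). Combined with the hypothesis $c_n/\|c_n\|\to u$ (respectively $c_n/\|c_n\|\overset w{\to} u$), we obtain $d_n/\|c_n\|\to u$ in the same mode of convergence.

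Finally, multiplying by the scalar sequence $\|c_n\|/\|d_n\|\to 1$ preserves both norm and weak convergence: for the norm case one uses the standard estimate $\|\lambda_n x_n-\lambda x\|\le|\lambda_n|\,\|x_n-x\|+|\lambda_n-\lambda|\,\|x\|$; for the weak case one uses that for each $x^*\in X^*$,
$$\langle x^*,\lambda_n x_n\rangle=\lambda_n\langle x^*,x_n\rangle\longrightarrow \lambda\langle x^*,x\rangle,$$
so $\lambda_n x_n\overset w{\to}\lambda x$ whenever $x_n\overset w{\to}x$ and $\lambda_n\to\lambda$. This yields $d_n/\|d_n\|\to u$ (respectively $\overset w{\to}u$), as required.

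No serious obstacle is anticipated; the only mild care needed is ensuring that in the weak-convergence case we only use scalar multiplication by a convergent numerical sequence (which preserves weak convergence), rather than inadvertently invoking continuity of multiplication in both factors.
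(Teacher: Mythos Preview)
Your proof is correct and follows essentially the same approach as the paper: both arguments rest on the elementary estimate that the difference between the two normalized sequences tends to $0$ in norm. The paper does this in one stroke by bounding $\bigl\|\tfrac{c_n}{\|c_n\|}-\tfrac{d_n}{\|d_n\|}\bigr\|\le \tfrac{2r}{\|c_n\|}$ directly (which immediately handles both the strong and weak cases), whereas you reach the same conclusion via the intermediate step $d_n/\|c_n\|\to u$ followed by multiplication by the scalar $\|c_n\|/\|d_n\|\to 1$; the underlying idea is identical.
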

\begin{proof} Since
$$\Big\|\frac{c_n}{\|c_n\|}-\frac{d_n}{\|d_n\|}\Big\|\le \frac{\|c_n-d_n\|}{\|c_n\|}+\Big|\frac{1}{\|c_n\|}-\frac{1}{\|d_n\|}\Big|\|d_n\|\le \frac{2r}{\|c_n\|}\to 0,$$
we have
$$\frac{c_n}{\|c_n\|}-\frac{d_n}{\|d_n\|}\longto 0,$$
from which the lemma follows.
\end{proof}
\begin{proof}[of Theorem \ref{thm6}] Assume $\Int \barcone(C)$ is nonempty. We prove $\dst(C;D)>0$ by contradiction. Suppose that there exist sequences $(c_n)\subset C$, $(d_n)\subset D$ such that $\|c_n-d_n\|\to 0$. There are two cases depending on whether or not $\|c_n\|$ tends to $\infty$.

$\bullet$ $\|c_n\|\to \infty$. Since the space is reflexive, without loss of generality, we may assume that
$$\frac{c_n}{\|c_n\|} \overset w\longrightarrow u\in X,$$
and hence, from Lemma \ref{lem3}, 
$$\frac{d_n}{\|d_n\|} \overset w\longto u.$$
Thus, by Lemma \ref{lem2}, $u\in \rec(C)\cap \rec(D)$. 

Choose $x^*\in \Int \barcone(C)$ such that $x^*\neq 0$. By Theorem \ref{thm8}, for some $\alpha>0$ we have
$$\<x^*,\frac{c_n}{\|c_n\|}\>\le -\alpha,$$
for $n$ large enough. By letting $n\to \infty$, we obtain
$$\<x^*, u\>\le -\alpha<0,$$
which implies $u\neq 0$, contradicting \eqref{eq5}.

$\bullet$ $\|c_n\|\not\to \infty$. By restricting to a subsequence if necessary, we may assume that $(c_n)$ weakly converges to $u\in X$. However, in this situation, $(d_n)$ also weakly converges to $u$. Since $C$ and $D$ are convex and closed, they are weakly closed. Thus, $u\in C\cap D$, contradicting to the assumption that $C$ and $D$ are disjoint.\end{proof}
\begin{example}
Let $C$ and $D$ be the sets given in Remark \ref{rem1}. For each $x\in C$, we have $x_1\ge 0$ and
$$\|x\|_2^2=\sum_{i=1}^\infty x_i^2\le 2x_1^2.$$
It implies that
$$\<-e_1, x\>=-x_1\le -\frac 1{\sqrt 2}\|x\|_2;\; \forall\, x\in C.$$
Therefore, by Theorem \ref{thm8}, $-e_1\in \Int\barc(C)$. Applying Theorem \ref{thm6}, we deduce that $C$ and $D$ are strongly separated.  While, as mentioned in Remark~\ref{rem1}, Theorem \ref{thm2} cannot be applied to establish a strong separation here.  
\end{example}
\begin{remark} The sets $C$ and $D$ given in Remark ~\ref{ex1}\, satisfy the condition in \eqref{eq5}, but are not strongly separated. It is not difficult to verify that
$$\barcone(C)=\barcone(D)=\{(y_n)\in l^2\mid \sup_{n\ge 1} (ny_n) <\infty\},$$
and hence, $$\Int\barcone(C)= \Int\barcone(D)=\emptyset.$$
This fact shows that the condition \eqref{eq4} is crucial even in the case where $X$ is  a Hilbert space.
\end{remark}
\begin{example}\label{ex2} In Theorem \ref{thm6}, the assumption about reflexivity of the space is essential. Consider two subsets of  the nonreflexive space $l^1$:
$$C=\Big\{\xi=(x_n)\in l^1\,\Big|\, \sum_{n=1}^\infty x_n=1;\; x_n\ge 0,\, \forall n\Big\},$$
$$D=\Big\{\zeta=(y_n)\in l^1\,\Big|\, \sum_{n=1}^\infty \frac{ny_n}{n+1} =1;\; y_n\ge 0,\, \forall n\Big\}.$$
Obviously, $C$ and $D$ are disjoint bounded closed convex  subsets of $l^1$. Thus, condition \eqref{eq5} is fulfilled. In addition, since $C$ is bounded, $\Int\barcone(C)=l^\infty$. However, by letting  $(\xi^k)\subset C$ and $(\zeta^k)\subset D$ be the sequences defined by
$$\xi^k=(0,\ldots,0,1^{k-{\text{th}}},0,\ldots);\quad \zeta^k=\frac{k+1}{k}\xi^k;\; k\in \N,$$
we have $\|\xi^k-\zeta^k\|_1=\frac 1k\to 0.$ Thus, $C$ and $D$  are not strongly separated. 
\end{example}

As we have seen,  in a finite-dimensional space, any pair of disjoint closed convex sets satisfying \eqref{eq5} are strongly separated. In the case of infinite-dimensional spaces, besides the assumption of local compactness or condition \eqref{eq4}, condition in \eqref{eq5}  is also required for the strong separation of convex sets. 

Thus, condition \eqref{eq5} plays an important role in the strong separation. However, it should be noted that, this condition alone is not enough to yield even the (weak) separation of two disjoint closed convex subsets.  The following example illustrates this point.
\begin{example}
Let $X$ be a real Hilbert space, in which there exist two closed subspaces $M$ and  $N$ such that 
$M\cap N=\{0\}$, $M+N$ is dense but not closed in $X$, i.e., $M+N\neq X$ (see, \cite[Problem 2, p. 129]{JD}).

Take  $x_0\in X\setminus (M+N)$ and let $C=x_0-M$,  $D=N$. Thus,  $C$ and  $D$ are disjoint closed convex subsets. Furthermore, since $\rec(C)=M$ and  $\rec(D)=N$, $\rec(C)\cap \rec(D)=\{0\}$. We shall show that $C$ and $D$ are not separated. Suppose the contrary. Take $v\in X\setminus\{0\}$ such that
$$\<v,x_0-m\>\le \<v,n\>,\; \forall\, m\in M,\, n\in N.$$
It implies that
$$\<v,x_0\>\le \<v,x\>,\; \forall\, x\in M+N.$$
Since $M+N$ is dense in $X$,  it follows that $v=0$, a contradiction. Hence, $C$ and $D$ are not separated.
\end{example}

\section{Subsets having the strong separation property}\label{sec:3}
In this section we are interested in properties of  subsets having the strong separation property. 
As usual, let $S$ and $S^*$ denote the unit spheres in $X$ and $X^*$, respectively. Let $C$ be a closed convex subset of $X$. In some cases, the following conditions are needed:

$(A)$ $X$ is reflexive and $\Int\barcone(C)\neq\emptyset$.

$(B)$ $C$ is locally compact.

$(C)$ For some $r>0$ and  finite-dimensional subspace $Z$ we have:
$$C\subset B(0;r)+Z.$$

\begin{remark} The conditions $(A)$, $(B)$, $(C)$ are strongly independent in the sense that, each of them cannot be followed from the two remaining ones. This fact will be shown by the examples below.

$\bullet$ $C=\{(x,y)\in \R^2\mid y\le 0\}$ satisfies $(B)$ and $(C)$ but fails $(A)$.

$\bullet$ $C=\{x=(x_n)\in l^2\mid \|x\|_2\le 1\}$ satisfies  $(A)$ and $(C)$ but fails $(B)$.

$\bullet$ Let 
$$C=\{x:=(x_n)\in l^2\mid 0\le x_{n+1}\le \frac{n}{n+1}x_n,\, \forall\, n\ge 1\}.$$ 
Then $C$ is a closed convex cone. For every $x\in C$ we have
$$0\le x_1,\; 0\le x_2\le \frac{x_1}{2},\; 0\le x_3\le \frac{2x_2}{3}\le \frac{x_1}{3},\ldots , 0\le x_n\le \frac{x_1}{n},\ldots $$
It implies that
$$0\le x_1\le \|x\|_2=\sqrt{\sum_{i=1}^\infty x^2_i}\le x_1\sqrt{\sum_{i=1}^\infty \frac 1{i^2}}=\frac{\pi x_1}{\sqrt 6}.$$ Taking $x^*_0=(-1, 0, 0, \ldots)\in l^2$ we obtain
$$\<x^*_0, x\>=-x_1\le -\frac{\sqrt 6}{\pi} \|x\|_2,\; \forall\, x\in C,$$
which, by Theorem \ref{thm8}, implies  $x^*_0\in \Int \barcone(C).$ Thus, $C$ satisfies $(A)$.

We have 
\begin{align*}
C\cap \bbar{B(0;1)}&=\{x\in l^2\mid  \|x\|_2\le 1; 0\le x_{n+1}\le \frac{n}{n+1}x_n,\, \forall\, n\ge 1\}\\
&\subset E:= \{x\in l^2\mid 0\le x_n\le \frac 1n,\; \forall n\ge 1\}.
\end{align*}
Since $E$ is compact, $C$ satisfies $(B)$. Finally we show that $C$ does not satisfy $(C)$. Indeed, if $(C)$ holds then, since $C$ is a closed convex cone,  $C=\rec(C)\subset Z.$ But this is impossible because $Z$ is finite-dimensional while $C$ contains the following  infinite set of linearly independent vectors:
$$V=\{(1,0,0,\ldots), (1,\frac 12,0,\ldots), (1,\frac 12, \frac 13,0,\ldots),\ldots\}.$$    
\end{remark}

Theorem \ref{thm5} below will provide a necessary condition for a closed convex subset of $X$ to have the strong separation property.
\begin{lemma}\label{lem5} Let $C$ be a closed convex subset of $X$ and $(c_n)\subset C$ is a sequence such that $\|c_n\|\to +\infty$. If one of the conditions $(A)$, $(B)$ or $(C)$ is satisfied, then there exists  a subsequence $(c_{n_k})$ of $(c_n)$ such that, for some $x^*_0\in S^*$ and $\rho>0$, we have 
\begin{equation}\label{eq36}
\lim_{n_k\to \infty}\<x^*_0,\frac{c_{n_k}}{\|c_{n_k}\|}\>=-\rho.
\end{equation} 
\end{lemma}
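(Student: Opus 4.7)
The plan is to treat the three hypotheses $(A)$, $(B)$, $(C)$ separately. In each case I would manufacture a unit functional $x^*_0 \in S^*$ and a subsequence along which $\<x^*_0, c_n/\|c_n\|\>$ tends to a strictly negative limit $-\rho$.

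Under $(A)$, I would pick some nonzero $x^* \in \Int\barcone(C)$ and invoke Theorem~\ref{thm8}$(c)$ to obtain $\alpha, R > 0$ with $\<x^*, c\> \le -\alpha\|c\|$ for all $c \in C\setminus B(0;R)$. Since $\|c_n\|\to\infty$, eventually $c_n\notin B(0;R)$, whence $\<x^*/\|x^*\|, c_n/\|c_n\|\> \le -\alpha/\|x^*\|$. This scalar sequence is trapped in the interval $[-1, -\alpha/\|x^*\|]$, so passing to a convergent subsequence produces the desired $\rho \ge \alpha/\|x^*\| > 0$, with $x^*_0 := x^*/\|x^*\| \in S^*$.

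Under $(B)$, the strategy is to manufacture a norm-convergent direction from local compactness. Let $c_0 \in C$ and $r>0$ witness local compactness. For $n$ large enough that $\|c_n - c_0\|>r$, set $u_n := c_0 + r(c_n - c_0)/\|c_n - c_0\|$; then $u_n \in C$ by convexity, $\|u_n - c_0\|=r$, and so $u_n \in \bbar{B(c_0;r)}\cap C$, which is compact. A subsequence $u_{n_k}$ converges to some $u \in C$ with $\|u-c_0\|=r$; setting $v := (u-c_0)/r$ gives a unit vector with $(c_{n_k}-c_0)/\|c_{n_k}-c_0\|\to v$ in norm. Since $\|c_{n_k}\|\to\infty$ and the shift by $c_0$ is bounded, Lemma~\ref{lem3} upgrades this to $c_{n_k}/\|c_{n_k}\|\to v$ in norm. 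A Hahn--Banach functional $x^*_0 \in S^*$ with $\<x^*_0, v\> = -\|v\| = -1$ then forces $\<x^*_0, c_{n_k}/\|c_{n_k}\|\>\to -1$, so $\rho := 1$ works.

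Under $(C)$, decompose $c_n = b_n + z_n$ with $\|b_n\|\le r$ and $z_n \in Z$. Since $\|c_n\|\to\infty$ while $\|b_n\|$ stays bounded, $\|z_n\|\to\infty$. Finite-dimensionality of $Z$ makes its unit sphere compact, so a subsequence $z_{n_k}/\|z_{n_k}\|$ converges in norm to a unit vector $v$. As $\|c_{n_k}-z_{n_k}\| = \|b_{n_k}\|\le r$ is bounded, Lemma~\ref{lem3} again yields $c_{n_k}/\|c_{n_k}\|\to v$ in norm, and a Hahn--Banach functional $x^*_0 \in S^*$ with $\<x^*_0, v\> = -1$ closes the case with $\rho = 1$.

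The main obstacle is case $(B)$: one has to extract a norm-convergent direction from a sequence whose norms blow up, using only local compactness of $C$. The key device is to slide each $c_n$ back along the segment from $c_0$ to $c_n$ until it lands on the sphere of radius $r$ around $c_0$, where compactness takes effect; Lemma~\ref{lem3} then absorbs the centering translation. Cases $(A)$ and $(C)$ are more routine once one has Theorem~\ref{thm8} and the observation that a bounded additive perturbation does not alter the limiting direction of a sequence whose norm tends to infinity.
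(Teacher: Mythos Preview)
Your proposal is correct and follows essentially the same three-case strategy as the paper; cases $(B)$ and $(C)$ are handled exactly as in the paper (you merely supply the sliding-along-the-segment detail in $(B)$ that the paper leaves implicit). The only minor variation is in case $(A)$: the paper invokes reflexivity to extract a weakly convergent subsequence of $c_n/\|c_n\|$ and then reads off $\rho = -\<x^*_0,s\>$, whereas you bypass reflexivity by applying Bolzano--Weierstrass directly to the bounded scalar sequence $\<x^*_0, c_n/\|c_n\|\>\in[-1,-\alpha/\|x^*\|]$---a slight simplification that is nonetheless the same idea.
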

\begin{proof}
We prove the lemma under each of the conditions: $(A)$, $(B)$ or $(C)$.

$(A)$ Take $x^*_0 \in S^*\cap \Int\barcone(C)$. By Theorem \ref{thm8}, for some $\alpha>0$ and $R>0$ we have
\begin{equation}\label{eq6}
\<x_0^*,c\>\le -\alpha\|c\|,\; \forall\, c\in C\setminus B(0;R).
\end{equation}
Since $X$ is reflexive, there exists a subsequence $(c_{n_k})$ of $(c_n)$ such that
$$\frac {c_{n_k}}{\|c_{n_k}\|}\overset{w}\longto s\in X.$$ 
This, together with \eqref{eq6}, implies \eqref{eq36} with $\rho=-\<x^*_0,s\>\ge \alpha>0.$ 

$(B)$ Since $C$ is locally compact, there exists a subsequence $(c_{n_k})$ of $(c_n)$ such that
$$\frac {c_{n_k}}{\|c_{n_k}\|}\longto s\in S.$$ 
By choosing $x^*_0\in S^*$ such that $\<x^*_0,s\>=-1$ we obtain \eqref{eq36} with $\rho=1$.
 
$(C)$  Since $C\subset B(0;r)+Z$, there exists a sequence $(z_n)\subset Z$ such that $\|z_n-c_n\|< r$ for all $n$, and hence, $\|z_n\|\to \infty.$ Since $\dim Z<\infty$, there exists a subsequence $(z_{n_k})$ of $(z_n)$ such that
$$\frac {z_{n_k}}{\|z_{n_k}\|}\longto s\in S.$$
From Lemma \ref{lem3} we also have
$$\frac {c_{n_k}}{\|c_{n_k}\|}\longto s\in S,$$
and by choosing $x^*_0$ as in the case of $(B)$ we obtain \eqref{eq36}.
\end{proof}

\begin{theorem}\label{thm5} Let $C\subset X$ be a closed convex subset having the strong separation property. In addition, at least one of the conditions $(A)$, $(B)$ or $(C)$ is satisfied. Then 
\begin{equation}\label{eq11}
\barcone(C)=\Int\barcone(C) \cup \{0\},
\end{equation}
that is to say, $x^*\in \Int\barcone(C)$ whenever $x^*\in \barcone(C)\setminus \{0\}.$
\end{theorem}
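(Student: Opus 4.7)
\emph{Proof plan.} I argue by contradiction: suppose $x^* \in \barc(C) \setminus \{0\}$ with $x^* \notin \Int\barc(C)$, and I will exhibit a closed convex set $D$ disjoint from $C$ but with $\dst(C,D)=0$. By Theorem~\ref{thm4}, this contradicts the strong separation property of $C$. Set $\sigma := \sigma_C(x^*)$ and split on whether $\sigma$ is attained on $C$.

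If $\sigma$ is not attained, take the hyperplane $D := \{x : \<x^*,x\> = \sigma\}$. It is disjoint from $C$ because $\<x^*,c\><\sigma$ for every $c\in C$, while any sequence $(c_n) \subset C$ with $\<x^*,c_n\> \to \sigma$ witnesses $\dst(c_n,D) = (\sigma - \<x^*, c_n\>)/\|x^*\| \to 0$. This easy subcase uses neither Theorem~\ref{thm8} nor the structural assumptions.

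Assume now $\sigma$ is attained at some $c^* \in C$. The failure of Theorem~\ref{thm8}$(d)$, combined with the automatic bound $\<x^*,c\>/\|c\| \leq \sigma/\|c\| \to 0$, yields a sequence $(c_n) \subset C$ with $\|c_n\| \to \infty$ and $\<x^*,c_n\>/\|c_n\| \to 0$. Applying Lemma~\ref{lem5} (this is where $(A)$, $(B)$, or $(C)$ enters) extracts a subsequence along which $\<x^*_0,c_n\>/\|c_n\| \to -\rho$ for some $x^*_0 \in S^*$ and $\rho > 0$; under $(A)$ I pass further to a weakly convergent subsequence of $c_n/\|c_n\|$ by reflexivity, while under $(B)$ or $(C)$ the proof of Lemma~\ref{lem5} already supplies norm convergence. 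In either case the resulting limit $u$ belongs to $\rec(C)$ by Lemma~\ref{lem2}, satisfies $\<x^*, u\> = 0$ by (weak) continuity, and $\<x^*_0, u\> = -\rho \neq 0$, so $u \neq 0$.

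Next, I pick $v \in X$ with $\<x^*, v\> > 0$ (possible since $x^* \neq 0$), which forces $u$ and $v$ to be linearly independent, and set
$$D := \{c^* + su + \alpha v : s \geq 0,\; \alpha \geq 1/(s+1)\}.$$
This is the injective affine image of the closed convex epigraph $\{(s,\alpha) \in \R^2: s \geq 0,\; \alpha \geq 1/(s+1)\}$ sitting inside the closed two-dimensional affine subspace $c^* + \R u + \R v$, hence $D$ is closed and convex; every $d \in D$ satisfies $\<x^*, d\> = \sigma + \alpha\<x^*, v\> > \sigma$, so $D \cap C = \emptyset$; and since $c^* + su \in C$ by recession invariance while $\|v/(s+1)\| \to 0$ as $s \to \infty$, one has $\dst(C,D)=0$. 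The hardest part of the argument will be locating this $D$: simpler candidates (halfspaces tilted by $x^*_0$, lines parallel to $u$ through a point off $C$, small translates $C + \varepsilon v$) either meet $C$ or stay at positive distance from it, and the hyperbolic-epigraph construction, modelled on the pair in \eqref{eq7}, is what simultaneously enforces the strict inequality $\<x^*, d\> > \sigma$ and vanishing distance along the recession ray $c^* + \R_+ u$.
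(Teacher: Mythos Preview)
Your argument is correct and takes a genuinely different route from the paper's.

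The paper does not split on whether $\sigma_C(x^*)$ is attained. Instead it uses the failure of condition~$(b)$ in Theorem~\ref{thm8} to produce a sequence $(c_n)\subset C$ with $\|c_n\|\to\infty$ and $\langle x^*,c_n\rangle\to\sigma_C(x^*)$, then sets $d_n:=c_n+\tfrac{4}{n}v$ for a unit vector $v$ with $\langle x^*,v\rangle>\tfrac12$, and takes $D:=\overline{\co}\{d_n\}$. The fact $\dst(C,D)=0$ is then immediate, but showing $C\cap D=\emptyset$ requires a page of delicate estimates: one assumes $c_0\in C\cap D$, approximates $c_0$ by convex combinations of finitely many $d_n$, and derives a numerical contradiction by playing the functional $x^*$ (which nearly maximises on every $d_n$) against the functional $x^*_0$ from Lemma~\ref{lem5} (which forces the tail terms to be large). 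Both proofs invoke Lemma~\ref{lem5} at the same point and for the same purpose---to certify that the normalised sequence has a nonzero (weak) cluster direction---but they exploit this direction very differently.

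Your construction is considerably more transparent. By first isolating the recession vector $u$ with $\langle x^*,u\rangle=0$ and $u\neq 0$, you reduce the problem to a two--dimensional picture inside $c^*+\R u+\R v$: the ray $c^*+\R_+ u$ lies in $C$ and on the supporting hyperplane $\{\langle x^*,\cdot\rangle=\sigma\}$, and the hyperbolic epigraph over it sits strictly on the far side of that hyperplane, so disjointness is a one--line computation rather than a combinatorial estimate. The price you pay is the preliminary case split, but Case~1 is free (and incidentally proves that the strong separation property forces every nonzero $x^*\in\barc(C)$ to attain its supremum on $C$, without any of the hypotheses $(A)$--$(C)$). The paper's approach has the minor advantage of treating both cases uniformly, but at the cost of a substantially heavier verification of disjointness.
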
 
\begin{proof} The proof is by contradiction. Suppose that there exists $x^*\in \barcone(C)\setminus \{0\}$ so that $x^*\not\in \Int\barcone(C)$. Since $\barc(C)$ is a cone we may assume $\|x^*\|=1$. By Theorem \ref{thm8}, \eqref{eq28} holds, and hence, there exists a sequence $(c_n)\subset C$ such that $\|c_n\|\to +\infty$ and
$$\<x^*,c_n\>\to \beta:=\sigma_C(x^*)<\infty.$$
Without loss of generality, we may assume that
$$\beta-\frac 1n<\<x^*,c_n\>\le \beta;\; \forall\, n.$$

From Lemma \ref{lem5}, without loss of generality we may assume that the sequence $\big(\frac{c_n}{\|c_n\|}\big)$ converges (strongly or weakly) to $s\in X$ and
$$\lim_{n\to \infty}\<x^*_0, \frac{c_n}{\|c_n\|}\>=\<x^*_0,s\>=-\rho<0,$$
for some $x^*_0\in S^*$ and $\rho>0$.

Choose  $v\in S$ such that $\<x^*,v\>>\frac 12$ and put  $d_n:=c_n+\frac 4nv$, for each integer $n\ge 1$. 
We now show that the following subset 
$$D=\bbar{\co}\{d_n \mid n\ge 1\}$$
is convex, closed and disjoint from $C$.

Clearly, $D$ is convex and closed. We prove $C\cap D=\emptyset$ by contradiction. 
Suppose that there exists $c_0\in C\cap D$. Since the sequence
$\big(\frac{c_n}{\|c_n\|}\big)$ converges (strongly or weakly) to $s\in X$, by virtue of Lemma \ref{lem3}, the sequence $\big(\frac{c_{n}-c_0}{\|c_{n}-c_0\|}\big)$ also converges to $s$.
Therefore, by setting 
$$s_n:=\frac{c_n-c_0}{\|c_n-c_0\|}, \quad t_n:=\|c_n-c_0\|$$ 
we have $s_n\in S$, $(s_n)$ converges (strongly or weakly) to $s$,  $t_n\to +\infty$,
$$c_n=c_0+t_ns_n;\; \forall\, n\ge 1,$$
and
$$\lim_{n\to\infty}\<x^*_0,s_n\>=\<x^*_0,s\>= -\rho<0.$$
Take $k\in \N$ large enough such that
\begin{equation}\label{eq1}
\<x^*_0,s_n\><-\frac \rho2,\; t_n>12;\; \forall\, n> k,
\end{equation}
and then set
\begin{equation}\label{eq19}
\gamma:=\max\{t_1,t_2,\ldots,t_k\}+1;\; \varepsilon:=\min\{\frac 1{2k\gamma}, \frac{2\rho}{5+k\gamma}\}< \frac 1{2k}.
\end{equation}
Since $c_0\in D=\bbar{\co}\{d_n \mid n\ge 1\}$, there exist nonnegative numbers $\lambda_1,\lambda_2,\ldots,\lambda_m$, with $m>k$, such that
$$\sum_{n=1}^m\lambda_n=1;\quad \Big\|\sum_{n=1}^m \lambda_n d_n-c_0\Big\|<\varepsilon.$$ 
Noting that $\|x^*\|=1$ we have
$$\varepsilon>\Big\|\sum_{n=1}^m \lambda_n d_n-c_0\Big\|\ge \<x^*,\sum_{n=1}^m \lambda_n d_n-c_0\> =\sum_{n=1}^m\lambda_n \<x^*,c_n+\frac 4nv\>-\<x^*,c_0\>$$
\begin{equation}\label{eq20}
\ge \sum_{n=1}^m \lambda_n (\beta-\frac 1n)+\sum_{n=1}^m \lambda_n \frac 2n-\beta  =\sum_{n=1}^m \frac{\lambda_n}{n}.
\end{equation}
It follows that
$$\varepsilon> \sum_{n=1}^k \frac{\lambda_n}{n}\ge \frac 1k\sum_{n=1}^k\lambda_n,$$
which, together with \eqref{eq19}, gives
\begin{equation}\label{eq21}
\sum_{n=1}^k\lambda_n<k\varepsilon\le \frac 12,
\end{equation}
and hence,
\begin{equation}\label{eq22}
\sum_{n=k+1}^m\lambda_n>\frac 12.
\end{equation}
On the other hand we also have
\begin{align*}
\varepsilon & >\Big\|\sum_{n=1}^m \lambda_n d_n-c_0\Big\|=\Big\|\sum_{n=1}^m \lambda_n (d_n-c_0)\Big\|=\Big\|\sum_{n=1}^m \lambda_n (t_ns_n+\frac 4n v)\Big\|\\
& \ge \Big\|\sum_{n=k+1}^m \lambda_n t_n s_n\Big\|-\Big\| \sum_{n=1}^k \lambda_n t_ns_n\Big\|-\Big\|\big(\sum_{n=1}^m \frac {4\lambda_n}n\big) v\Big\|.
\end{align*}
Noting that $v, s_n\in S$, $x^*_0\in S^*$ and $\<x^*_0,s_n\><-\frac \rho 2$ for $n>k$ one has
\begin{align*}
\varepsilon &> \<-x^*_0,\sum_{n=k+1}^m \lambda_n t_n s_n\> -\sum_{n=1}^k \lambda_n t_n-4\sum_{n=1}^m \frac {\lambda_n}n\\
&>\frac \rho 2\sum_{n=k+1}^m \lambda_nt_n-\sum_{n=1}^k \lambda_n t_n-4\sum_{n=1}^m \frac {\lambda_n}n.
\end{align*}
It follows that
\begin{equation}\label{eq31}
\frac \rho2 \sum_{n=k+1}^m \lambda_n t_n<\varepsilon+\sum_{n=1}^k \lambda_n t_n+4\sum_{n=1}^m \frac {\lambda_n}n.
\end{equation}
Since \eqref{eq1} and \eqref{eq22} we have
\begin{equation}\label{eq32}
\frac \rho2 \sum_{n=k+1}^m \lambda_n t_n>\frac \rho 2 \frac 12 12=3\rho.
\end{equation}
On the other hand, from \eqref{eq19} and \eqref{eq21} it follows that
\begin{equation}\label{eq33}
\sum_{n=1}^k \lambda_n t_n<\gamma\sum_{n=1}^k \lambda_n <k\gamma \varepsilon.
\end{equation}
Combining \eqref{eq31},\eqref{eq32}, \eqref{eq33}, \eqref{eq20} and the definition of $\varepsilon$ we obtain
$$3\rho<\varepsilon+k\gamma\varepsilon+4\varepsilon=(5+k\gamma)\varepsilon\le 2\rho$$
which is clearly absurd. Consequently, $C\cap D=\emptyset$.

Consequently, $D$ is a closed convex subset disjoint from $C$. On the other hand, since $\|c_n-d_n\|=\frac 4n\to 0$, $\dst(C;D)=0$, and hence, $C$ and $D$ are not strong separated. Thus, $C$ does not have the strong separation property. This completes the proof of the theorem.
\end{proof}

\begin{proposition}\label{prop1} If $C$ is unbounded and $\bbar{\aff(C)}\neq X$ then 
\begin{equation}\label{eq34}
\barcone(C)\neq \Int\barc(C)\cup \{0\}.
\end{equation}
\end{proposition}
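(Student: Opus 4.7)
The plan is to produce an explicit element of $\barc(C)\setminus(\Int\barc(C)\cup\{0\})$ by exploiting continuous linear functionals that are constant on $C$.

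First, I would fix an arbitrary $c_0\in C$ and set $L:=\bbar{\aff(C)-c_0}$, which is a closed linear subspace of $X$. Since $\bbar{\aff(C)}=c_0+L$ and by hypothesis $\bbar{\aff(C)}\neq X$, the subspace $L$ is a proper closed subspace of $X$. By the Hahn--Banach theorem there exists $x^*\in X^*\setminus\{0\}$ that vanishes on $L$. For every $c\in C$ one has $c-c_0\in L$, hence $\<x^*,c\>=\<x^*,c_0\>=:\alpha$, so $\sigma_C(x^*)=\alpha<\infty$ and therefore $x^*\in\barc(C)\setminus\{0\}$.

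Next, I would invoke the equivalence of $(a)$ and $(d)$ in Theorem \ref{thm8} to show that this same $x^*$ lies outside $\Int\barc(C)$. Using the unboundedness of $C$, pick a sequence $(c_n)\subset C$ with $\|c_n\|\to\infty$; then
\[
\frac{\<x^*,c_n\>}{\|c_n\|}=\frac{\alpha}{\|c_n\|}\longrightarrow 0,
\]
so $\limsup_{c\in C;\,\|c\|\to\infty}\frac{\<x^*,c\>}{\|c\|}\ge 0$. By Theorem \ref{thm8}, the failure of strict negativity rules out $x^*\in\Int\barc(C)$. Combined with the previous paragraph, this gives $x^*\in\barc(C)\setminus(\Int\barc(C)\cup\{0\})$, which is exactly \eqref{eq34}.

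I do not expect a serious technical obstacle: each of the two hypotheses supplies one of the two needed ingredients, with $\bbar{\aff(C)}\neq X$ yielding a nonzero constant-on-$C$ functional via Hahn--Banach, and the unboundedness of $C$ preventing $\<x^*,c\>/\|c\|$ from being asymptotically strictly negative. The only point that deserves care is that $\aff(C)$ itself need not be closed, but passing to $L:=\bbar{\aff(C)-c_0}$ sidesteps this issue cleanly while preserving the properness required by Hahn--Banach.
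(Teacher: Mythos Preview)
Your proposal is correct and follows essentially the same route as the paper: both arguments use the hypothesis $\bbar{\aff(C)}\neq X$ (via Hahn--Banach) to produce a nonzero $x^*$ that is constant on $C$, hence lies in $\barc(C)$, and then invoke Theorem~\ref{thm8} together with the unboundedness of $C$ to show $x^*\notin\Int\barc(C)$. The only cosmetic difference is that the paper appeals to criterion~$(b)$ of Theorem~\ref{thm8} (the failure of \eqref{eq27}, i.e.\ \eqref{eq28}), whereas you use the equivalent criterion~$(d)$.
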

\begin{proof}
Indeed, since $\bbar{\aff(C)}\neq X$  there exists $x^*\in X^*\setminus\{0\}$ such that
$$\<x^*,c\>=\alpha:=\sigma_C(x^*),\; \forall\, c\in C.$$
Hence, $x^*\in \barcone(C)$. On the other hand, since $C$ is unbounded, \eqref{eq28} holds. It now follows from Theorem \ref{thm8} that $x^*\not\in \Int\barc(C)$ and \eqref{eq34} is derived.
\end{proof}
From Theorem \ref{thm5} and Proposition \ref{prop1} we deduce the next corollary.
\begin{corollary}\label{cor1} Let $C$ be an unbounded closed convex subset of $X$ having the strong separation property. In addition, suppose that at least one of the conditions $(A)$, $(B)$, $(C)$ is satisfied. Then $\bbar{\aff(C)}=X$. Furthermore, if $\dim(C)<\infty$ then $\aff(C)=X$ and $\Int C\neq \emptyset.$
\end{corollary}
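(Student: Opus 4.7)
The first assertion is an immediate consequence of combining Theorem~\ref{thm5} with the contrapositive of Proposition~\ref{prop1}. The plan is as follows: since $C$ has the strong separation property and satisfies at least one of $(A)$, $(B)$, $(C)$, Theorem~\ref{thm5} yields the identity $\barc(C)=\Int\barc(C)\cup\{0\}$. Proposition~\ref{prop1} says that, for any unbounded closed convex set, the condition $\bbar{\aff(C)}\neq X$ forces $\barc(C)\neq \Int\barc(C)\cup\{0\}$. Because $C$ is assumed unbounded, these two conclusions are compatible only when $\bbar{\aff(C)}=X$, which gives the first claim.

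For the further claim under the hypothesis $\dim(C)<\infty$, I would invoke the standard fact that every finite-dimensional affine subspace of a normed linear space is automatically norm-closed. Since $\aff(C)$ has dimension $\dim(C)<\infty$, it is closed, and hence $\aff(C)=\bbar{\aff(C)}=X$ by the first part. This forces $X$ itself to be finite-dimensional. In a finite-dimensional normed space, every nonempty convex subset has nonempty relative interior, and when $\aff(C)=X$ the relative interior coincides with the interior; therefore $\Int C=\ri C\neq\emptyset$.

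I do not expect any serious obstacle: the corollary is essentially a repackaging of Theorem~\ref{thm5} and Proposition~\ref{prop1}, plus the two elementary facts that finite-dimensional affine subspaces are closed and that nonempty convex sets in finite-dimensional spaces have nonempty relative interior. The only thing to be careful about is making sure that the hypotheses of Proposition~\ref{prop1}---unboundedness and $C$ being closed convex---are indeed available, both of which are explicitly in the statement of the corollary.
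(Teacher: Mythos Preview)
Your proposal is correct and follows exactly the route the paper indicates: the paper simply states that the corollary follows from Theorem~\ref{thm5} and Proposition~\ref{prop1}, and you have filled in precisely those details, together with the standard facts about finite-dimensional affine subspaces being closed and convex sets in finite-dimensional spaces having nonempty relative interior needed for the ``furthermore'' clause.
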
 
As a converse of Theorem \ref{thm5} we have the following.
\begin{theorem}\label{thm7} Let $X$ be a reflexive Banach space and $C\subset X$ be a closed convex subset.  If $C$ has the separation property and \eqref{eq11} holds, then $C$ has the strong separation property.
\end{theorem}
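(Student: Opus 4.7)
Let $D$ be any closed convex subset of $X$ disjoint from $C$; without loss of generality both are nonempty (else there is nothing to separate). The goal is to show $\dst(C;D) > 0$ and then invoke Theorem~\ref{thm4}. The separation property supplies some $x^* \in X^* \setminus \{0\}$ with $\sigma_C(x^*) \le \inf_{d \in D}\langle x^*, d\rangle$. Because $D \neq \emptyset$, the right-hand side is a finite real number, so $\sigma_C(x^*) < \infty$; together with $x^* \neq 0$, this places $x^*$ in $\barcone(C) \setminus \{0\}$. The hypothesis~\eqref{eq11} then upgrades this to $x^* \in \Int \barcone(C)$, and Theorem~\ref{thm8}\,$(c)$ yields constants $\alpha, R > 0$ such that $\langle x^*, c\rangle \le -\alpha\|c\|$ for every $c \in C$ with $\|c\| \ge R$.

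\textbf{Contradiction argument.} Suppose toward a contradiction that $\dst(C;D) = 0$ and pick sequences $(c_n) \subset C$, $(d_n) \subset D$ with $\|c_n - d_n\| \to 0$. I would split on whether $(c_n)$ stays bounded. If it does, reflexivity provides a subsequence with $c_n \rightharpoonup u$ for some $u \in X$; then $d_n \rightharpoonup u$ as well, and since convex norm-closed sets are weakly closed, $u \in C \cap D$, contradicting disjointness. If instead $\|c_n\| \to \infty$ along a subsequence, the coercive bound from Theorem~\ref{thm8}\,$(c)$ forces $\langle x^*, c_n\rangle \le -\alpha \|c_n\| \to -\infty$, whereas $\langle x^*, d_n\rangle \ge \sigma_C(x^*)$ stays bounded below by a fixed finite constant. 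Since $|\langle x^*, d_n\rangle - \langle x^*, c_n\rangle| \le \|x^*\|\,\|d_n - c_n\| \to 0$, the two $x^*$-values share a common limit, forcing $\sigma_C(x^*) = -\infty$, which is absurd.

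\textbf{Main obstacle.} The argument is essentially a reuse of the first case of the proof of Theorem~\ref{thm6}, so no step is genuinely hard. The one point worth noticing is that condition~\eqref{eq11} is precisely what converts the mere separation functional into an interior point of $\barcone(C)$; this activates the coercive estimate of Theorem~\ref{thm8}\,$(c)$ and thereby substitutes both for the assumption $\Int\barcone(C)\neq\emptyset$ and for the recession-cone condition $\rec(C)\cap\rec(D)=\{0\}$ that Theorem~\ref{thm6} required.
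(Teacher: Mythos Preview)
Your proof is correct and follows essentially the same approach as the paper's. The paper argues by contrapositive (assuming $C$ has the separation property but not the strong one, it produces $x^*\in\barcone(C)\setminus(\Int\barcone(C)\cup\{0\})$ via Theorem~\ref{thm8}\,$(b)$), whereas you argue directly and invoke Theorem~\ref{thm8}\,$(c)$; the case split on boundedness of $(c_n)$, the use of reflexivity for the bounded case, and the comparison of $\langle x^*,c_n\rangle$ with $\langle x^*,d_n\rangle$ in the unbounded case are identical in both arguments.
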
 
\begin{proof} We shall prove that, if  $C$ has the separation property but does not have the strong separation property, then \eqref{eq11} fails to hold.

Let $D$ be a closed convex subset of $X$, disjoint from $C$, but cannot be strongly separated from $C$. That is $\dst(C;D)=0$, i.e.,  there exist sequences $(c_n)\subset C$, $(d_n)\subset D$ such that $\|c_n-d_n\|\to 0$. If $\|c_n\|\not\to +\infty$ then, since $X$ is reflexive, by restricting to a subsequence if necessary, we may assume that $c_n\overset{w}\to \bar x$ and hence, $d_n\overset{w}\to \bar x$ too. Since $C$ and $D$ are (weakly) closed, $\bar x$ must belong to both of them, contradicting the assumption that they are disjoint. Consequently, 
\begin{equation}\label{eq23}
\|c_n\|\to +\infty.
\end{equation} 

On the other hand, by the separation property of $C$, there is a hyperplane $H(x^*;\alpha)$ ($x^*\neq 0$) separating $C$ and $D$, i.e.
\begin{equation}\label{eq24}
\<x^*,c\>\le \alpha\le \<x^*,d\>;\; \forall\, c\in C,\, \forall\,d\in D.
\end{equation}
It implies that $x^* \in \barc(C)$ and
$$\<x^*,c_n\>\le \alpha \le \<x^*,d_n\>;\; \forall\, n.$$
Noting that $\<x^*,d_n-c_n\>\le \|x^*\|\|d_n-c_n\|\to 0$ we derive the equalities:
$$\lim_{n\to \infty}\<x^*,c_n\>=\lim_{n\to \infty}\<x^*,d_n\>=\alpha,$$
which, together with  \eqref{eq23}-\eqref{eq24}, implies \eqref{eq28}. Thus, $x^*\in \barcone(C)\setminus \Int\barc(C)$. 
\end{proof}
\begin{theorem} Let $X$ be an infinite-dimensional real Hilbert space and $C$ be an unbounded closed convex subset of $X$. If, in addition, $C$ is locally compact, then it does not have the strong separation property. 
\end{theorem}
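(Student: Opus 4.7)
My plan is to argue by contradiction, combining the Hilbert-space characterization of the separation property from Theorem~\ref{thm10} with the classical fact that closed balls in an infinite-dimensional Hilbert space are not compact.

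First, I would assume that $C$ has the strong separation property; then $C$ has the (ordinary) separation property a fortiori. Since $C$ is unbounded, closed, and convex in the Hilbert space $X$, I would invoke Theorem~\ref{thm10} to obtain that $\aff(C)$ is a closed affine subspace of finite codimension and that $\ri C\neq \emptyset$. Writing $\aff(C)=c_0+M$ for some closed linear subspace $M$ of $X$, the assumption $\dim X=\infty$ combined with $\mathrm{codim}\, M<\infty$ forces $\dim M=\infty$. Being a closed subspace of a Hilbert space, $M$ is itself an infinite-dimensional Hilbert space, so its closed balls are not compact.

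Next, I would use $\ri C\neq\emptyset$ to exhibit an entire $M$-ball inside $C$. I would pick $\bar c\in\ri C$ and $\delta>0$ so that $(\bar c+B(0;\delta))\cap \aff(C)\subset C$, which simply reads $\bar c+(B(0;\delta)\cap M)\subset C$. The translated closed ball $K:=\bar c+(\delta/2)(\bbar{B(0;1)}\cap M)$ then lies in $C$ and also in $\bbar{B(\bar c;\delta/2)}$, hence in $C\cap \bbar{B(\bar c;\delta/2)}$. Local compactness of $C$ makes this intersection compact in $X$, which forces $K$, being closed, to be compact; translating by $-\bar c$ would then compactify a closed ball in the infinite-dimensional Hilbert space $M$, the desired contradiction.

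The main obstacle I anticipate is interpretive rather than computational: Theorem~\ref{thm10} is the deep input from \cite{ErThe} doing the real work, and one must read $\ri C$ in the standard convex-analysis sense (a relative interior point admitting a full $\aff(C)$-neighborhood inside $C$), which is the only reading consistent with Theorem~\ref{thm10}. Granted this, the rest is a short application of the no-compact-balls fact in infinite-dimensional Hilbert space, with no delicate barrier-cone manipulation needed.
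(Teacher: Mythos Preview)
Your argument is correct and follows the same overall strategy as the paper: assume the strong separation property, invoke Theorem~\ref{thm10} to get $\ri C\neq\emptyset$ and $\aff(C)$ closed of finite codimension, and then derive a contradiction with local compactness via the non-compactness of balls in infinite-dimensional Hilbert space.

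The difference lies in the execution. The paper first applies Corollary~\ref{cor1} (which rests on Theorem~\ref{thm5} and Proposition~\ref{prop1}) to upgrade the conclusion of Theorem~\ref{thm10} to $\aff(C)=X$, hence $\Int C\neq\emptyset$, and then observes that a locally compact set in an infinite-dimensional space cannot have nonempty interior. You bypass Corollary~\ref{cor1} entirely: since $\aff(C)=c_0+M$ with $M$ of finite codimension in the infinite-dimensional space $X$, the subspace $M$ is itself infinite-dimensional, and a relative-interior ball in $C$ already furnishes a closed $M$-ball whose compactness would contradict $\dim M=\infty$. Your route is therefore more self-contained, as it does not need the barrier-cone machinery of Theorem~\ref{thm5}; the paper's route, on the other hand, illustrates how Corollary~\ref{cor1} feeds back into the theory. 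Your remark that $\ri C$ must be read in the standard sense $B(x;\epsilon)\cap\aff(C)\subset C$ is also well taken, since the paper's displayed definition appears to contain a typographical inversion.
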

\begin{proof} Suppose the contrary that, $C$ has the strong separation property. By virtue of Theorem~\ref{thm10}, $\aff(C)$ is a finite-codimensional  closed affine subspace and $\ri C\neq \emptyset$.  
On the other hand, by Corollary \ref{cor1}, $\aff(C)=\bbar{\aff(C)}=X$, and hence, $\Int C=\ri C\neq \emptyset$. But this is impossible because $C$ is a locally compact subset in an infinite-dimensional space.
\end{proof}
\begin{theorem} Let $X$ be a real Hilbert space and $C\subset X$ be an unbounded closed convex subset satisfying either condition $(A)$ or $(C)$. Then $C$ has the strong separation property if and only if\, $\Int C$ is nonempty and \eqref{eq11} holds. 
\end{theorem}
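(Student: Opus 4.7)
The plan is to assemble the theorem directly from the machinery already developed; both directions should follow as short corollaries of earlier results, and my real task is only to keep careful track of which hypotheses are in force at each step.

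For the necessity direction, I assume $C$ has the strong separation property and apply three results in turn. First, Theorem~\ref{thm5} applies because $C$ is unbounded and satisfies $(A)$ or $(C)$, and it delivers \eqref{eq11} immediately. Second, the strong separation property trivially implies the separation property, so Theorem~\ref{thm10} (valid here since $X$ is a Hilbert space) forces $\aff(C)$ to be a finite-codimensional closed affine subspace with $\ri C \neq \emptyset$. Third, Corollary~\ref{cor1}, whose hypotheses are exactly what we have, yields $\bbar{\aff(C)} = X$. Combined with the closedness of $\aff(C)$ supplied by Theorem~\ref{thm10}, this forces $\aff(C) = X$, so $\ri C = \Int C$ and hence $\Int C \neq \emptyset$.

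For the sufficiency direction, I assume $\Int C \neq \emptyset$ and \eqref{eq11}. The nonempty interior of $C$ puts us in the setting of Theorem~\ref{thm1}$(a)$ whenever we test $C$ against a disjoint closed convex set, so $C$ has the separation property. Since a Hilbert space is reflexive, Theorem~\ref{thm7} then applies with \eqref{eq11} as the supplementary hypothesis and promotes the separation property all the way to the strong separation property.

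The main ``obstacle'' is genuinely only bookkeeping: I must verify that each invocation of a prior result has its hypotheses in place. In particular, the reflexivity required by Theorems~\ref{thm5} and~\ref{thm7} is supplied automatically by the Hilbert structure of $X$, while the standing assumption of $(A)$ or $(C)$ is inherited verbatim from the hypothesis of the present theorem, and the unboundedness of $C$ is what allows Corollary~\ref{cor1} to deliver $\bbar{\aff(C)} = X$ rather than a weaker conclusion. No substantively new argument is required beyond these appeals.
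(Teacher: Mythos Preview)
Your proposal is correct and follows essentially the same route as the paper: for necessity you invoke Theorem~\ref{thm5}, Theorem~\ref{thm10}, and Corollary~\ref{cor1}, and for sufficiency you combine Theorem~\ref{thm1}$(a)$ with Theorem~\ref{thm7}, exactly as the paper does. Your write-up is simply a more explicit unpacking of the paper's terse citation of these results, with the useful extra observation that the closedness of $\aff(C)$ from Theorem~\ref{thm10} together with $\bbar{\aff(C)}=X$ from Corollary~\ref{cor1} is what forces $\aff(C)=X$ and hence $\ri C=\Int C$.
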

\begin{proof}\

 If $C$ has the strong separation property then, by Theorem~\ref{thm10}, Theorem~\ref{thm5}, and Corollary \ref{cor1} we deduce that $\Int C$ is nonempty and \eqref{eq11} holds. 

Conversely, if\, $\Int C$ is nonempty and \eqref{eq11} holds then, by  Theorem~\ref{thm1} and Theorem~\ref{thm7}, $C$ has the strong separation property.
\end{proof}
\begin{corollary}\label{cor3} Let $C$ be a closed convex subset in a finite-dimensional space $X$. Then, $C$ has the strong  separation property if and only if \eqref{eq11} holds. Furthermore, if $C$ is unbounded and $C$ has the strong  separation property then $\Int C$ is nonempty.
  \end{corollary}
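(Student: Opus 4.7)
The plan is to reduce both implications to results already established in this section, exploiting the fact that in a finite-dimensional space every hypothesis about local structure comes for free. First I will observe that in $\dim X<\infty$ the conditions $(B)$ and $(C)$ are both automatically satisfied: condition $(B)$ holds because every closed subset of a finite-dimensional space is locally compact, and condition $(C)$ holds trivially by taking $Z=X$ (and any $r>0$). Also, finite-dimensional spaces are reflexive, so the hypotheses of Theorem~\ref{thm7} are met as well.

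For the necessity direction, assume $C$ has the strong separation property. Since $(B)$ is satisfied, Theorem~\ref{thm5} applies directly and yields \eqref{eq11}. For the sufficiency direction, assume that \eqref{eq11} holds. By Theorem~\ref{thm1}(b), any two disjoint convex subsets of a finite-dimensional space are separated; in particular, $C$ has the (weak) separation property. Since $X$ is reflexive and \eqref{eq11} holds, Theorem~\ref{thm7} now upgrades this to the strong separation property, giving the converse.

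For the furthermore part, suppose $C$ is unbounded and has the strong separation property. Since $(B)$ and $(C)$ hold automatically and $\dim(C)\le \dim(X)<\infty$, Corollary~\ref{cor1} applies directly and gives $\aff(C)=X$ together with $\Int C\neq \emptyset$, which is exactly the claim.

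I do not anticipate a real obstacle here, since the entire content of the statement is a specialization of the general theory already built up: the role of the proof is essentially to verify that conditions $(A)$--$(C)$ and reflexivity are free in finite dimensions, so that Theorems~\ref{thm5} and~\ref{thm7} together with Corollary~\ref{cor1} can be invoked directly. The only mild subtlety worth double-checking is that the ``separation property'' hypothesis required by Theorem~\ref{thm7} is indeed automatic in finite dimensions, which is precisely Theorem~\ref{thm1}(b).
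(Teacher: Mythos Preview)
Your proposal is correct and follows essentially the same approach as the paper: observe that in finite dimensions the space is reflexive, every closed convex set is locally compact (so condition $(B)$ holds), and every closed convex set has the separation property by Theorem~\ref{thm1}(b), then invoke Theorems~\ref{thm5} and~\ref{thm7}. Your treatment is in fact slightly more explicit than the paper's, since you separately cite Corollary~\ref{cor1} for the ``furthermore'' clause, whereas the paper simply attributes the whole conclusion to Theorems~\ref{thm5} and~\ref{thm7}.
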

\begin{proof} If $X$ is finite-dimensional then it is reflexive and every closed  convex subset of $X$ is locally compact and has the separation property. The conclusion of the corollary therefore follows directly from Theorem~\ref{thm5} and Theorem \ref{thm7}.  
\end{proof}
\begin{remark}
Corollary~\ref{cor3} shows that, in finite-dimensional spaces, apart from bounded subsets, every unbounded closed convex subset  also has the strong separation property whenever the condition \eqref{eq11} is fulfilled. The example below presents a set of this type.
\end{remark}
\begin{example}\label{ex4} The following subset
$$C=\{(x,y)\in \R^2\mid y\ge x^2\}$$
is convex, closed and unbounded. It is not hard to verify that
$$\sigma_C(u,v)=\begin{cases}
+\infty,& \text{ if } (v>0) \text{ or } ((v=0) \text{ and } (u\neq 0)),\\
0,& \text{ if } u=v=0,\\
-\frac{u^2}{4v},& \text{ if } v<0. 
\end{cases}
$$
Consequently, $\barc(C)=\{(0,0)\}\cup\{(u,v)\mid v<0\}$, $\Int\barc(C)=\{(u,v)\mid v<0\}$. Thus, the condition \eqref{eq11} holds, and $C$ has the  strong separation property. 
\end{example}
\begin{example} Consider the subset of $\R^2$:
$$C=\{(x,y)\in \R^2\mid \exp(x)-y\le 0\}.$$ 
Since
$$\sigma_C(u,v)=\begin{cases}
+\infty,& (u<0 \text{ or } v\ge 0)\text{ and }((u,v)\neq (0,0)),\\
u\ln(-\frac{u}{v})-u,& u>0>v,\\
0,& v\le 0=u,
\end{cases}$$
$\barc(C)=\{(u,v)\mid u\ge 0>v\}\cup\{(0,0)\}$. Thus, 
$$\Int\barc(C)\cup\{(0,0)\}=\{(u,v)\mid u> 0>v\}\cup\{(0,0)\}\neq \barc(C).$$
It implies that $C$ does not have the strong separation property. 
\end{example}
\section{Application to a convex optimization problem}\label{sec:4}
In this section we shall establish some results for a convex optimization problem whose constraint set has the strong separation property. We assume throughout the section that $f:\R^n\to \bbar \R$ is a proper convex, lower semicontinuous function and $M\subset \R^n$ is a nonempty closed convex set. Consider the  optimization problem:
$$\mathcal{P}(M; f):\; \begin{cases}
f(x) \to \inf,\\
x\in M,
\end{cases}$$ in which we seek $\bar x\in M$ such that 
$$f(\bar x)=\bar f:=\inf\{f(x):\; x\in M\}.$$
The solution set of $\mathcal{P}(M; f)$ is denoted by $\Sol(M; f)$, that is,
$$\Sol(M; f)=\{\bar x\in M\mid f(\bar x)=\bar f\}.$$

The horizon function $f^\infty: \R^n\to \bbar{\R}$, associated with $f$ is defined by
$$f^\infty(v):=\lim_{\lambda\to +\infty}\frac{f(x_0+\lambda v)-f(x_0)}{\lambda},$$
with some $x_0\in \Dom f$. In fact, such a limit is independent of $x_0\in \Dom f$. The function $f^\infty$ is proper, sublinear and lower semicontinuous  (see for example \cite{RW}).  
$f$ is said to be coercive if  
$$\lim_{\|x\|\to \infty}f(x)=+\infty.$$
Since $f$ is convex on a finite-dimensional space, it is not difficult to verify that $f$ is coercive if and only if 
$$\liminf_{\|x\|\to \infty}\frac{f(x)}{\|x\|}>0,$$
or, equivalently,
 \begin{equation}\label{eq8}
\forall\, v\neq 0,\;  f^\infty(v)>0.
 \end{equation}
 
It is well known that, if the objective function $f$ is coercive and the constraint set $M$ is closed, then $\Sol(M; f)$ is nonempty and compact. In the following, we show that if $M$ has the strong separation property then, in order for the solution set to be compact, $f$ need not be coercive, instead, it is required to satisfy the next weaker condition:
\begin{equation}\label{eq10}
\forall\, 0\neq v\in C(f^\infty; 0), \exists\, \tilde x\in \Dom f, \ds\lim_{\lambda\to +\infty}f(\tilde x+\lambda v)=-\infty,
\end{equation}
where
$$C(f^\infty; 0):=\{v\in \R^n\mid f^\infty(v)\le 0\}.$$
This fact is stated in the following theorem. 
\begin{theorem}\label{thm9} If $M$ has the strong separation property, $f$ is bounded below on $M$ and satisfies condition \eqref{eq10}, then the solution set of $\mathcal{P}(M; f)$ is nonempty and compact.
\end{theorem}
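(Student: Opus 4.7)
The plan is to reduce everything to the single key relation
\begin{equation*}
\rec(M) \cap C(f^\infty;0) = \{0\}. \quad (\ast)
\end{equation*}
Granting $(\ast)$, the theorem follows easily. Given a minimizing sequence $(x_k) \subset M$ with $f(x_k) \to \bar f$, if $\|x_k\| \to \infty$ then, up to subsequence, $x_k/\|x_k\| \to v$ with $\|v\| = 1$. Since $\R^n$ is reflexive, Lemma \ref{lem2} places $v$ in $\rec(M)$; and the identity $\mathrm{epi}(f^\infty) = \rec(\mathrm{epi}(f))$ applied to $(x_k, f(x_k))/\|x_k\| \to (v, 0)$ (which is valid because $f(x_k) \to \bar f$ is finite) forces $f^\infty(v) \le 0$, contradicting $(\ast)$. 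So $(x_k)$ is bounded, and closedness of $M$ together with lower semicontinuity of $f$ yields a minimizer. Applying the same dichotomy to an arbitrary sequence in $\Sol(M;f)$ also shows $\Sol(M;f)$ is bounded; being closed in $\R^n$, it is compact.

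The real work, and the only place the strong separation property is used, is proving $(\ast)$. Suppose for contradiction that some $v \in \rec(M) \cap C(f^\infty;0)$ is nonzero. By \eqref{eq10}, pick $\tilde x \in \Dom f$ with $f(\tilde x + \lambda v) \to -\infty$, so that
$$D := \{x \in \R^n : f(x) \le \bar f - 1\}$$
is a nonempty closed convex set (closed by lower semicontinuity, nonempty because $f(\tilde x + \lambda v)$ eventually drops below $\bar f - 1$), and $D \cap M = \emptyset$ since $f \ge \bar f$ on $M$. By the strong separation property of $M$ and Theorem \ref{thm4}, $\dst(M; D) = \delta > 0$. We shall contradict this by placing points of $D$ arbitrarily close to $M$.

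Fix $x_0 \in M \cap \Dom f$. Because $v \in \rec(M)$, every $x_0 + \mu v$ lies in $M$, and since $f^\infty(v) \le 0$ is the limit of the nondecreasing secant slopes $[f(x_0+\mu v) - f(x_0)]/\mu$ of the convex function $\mu \mapsto f(x_0 + \mu v)$, one has $f(x_0+\mu v) \le f(x_0)$ for every $\mu \ge 0$. For $\epsilon \in (0,1)$ and $\mu > 0$ set
$$y_{\epsilon,\mu} := (1-\epsilon)(x_0+\mu v) + \epsilon(\tilde x + \mu v) = (1-\epsilon)x_0 + \epsilon\tilde x + \mu v.$$
Convexity of $f$ gives
$$f(y_{\epsilon,\mu}) \le (1-\epsilon)f(x_0) + \epsilon f(\tilde x + \mu v),$$
and the right-hand side tends to $-\infty$ as $\mu \to \infty$; hence $y_{\epsilon,\mu} \in D$ for $\mu$ large enough, while
$$\dst(y_{\epsilon,\mu}, M) \le \|y_{\epsilon,\mu} - (x_0 + \mu v)\| = \epsilon\|\tilde x - x_0\|.$$
Letting $\epsilon \to 0$ forces $\dst(M;D) = 0$, contradicting $\delta > 0$, and proving $(\ast)$.

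The main obstacle is the construction of $y_{\epsilon,\mu}$: one must exhibit points that have descended below $\bar f - 1$ yet remain within $O(\epsilon)$ of $M$. The trick is to translate both the ``$M$-anchor'' $x_0$ and the ``plunge point'' $\tilde x$ by the \emph{same} amount $\mu v$ along the common direction $v \in \rec(M)$; this makes $y_{\epsilon,\mu} - (x_0 + \mu v)$ independent of $\mu$, so $\mu$ can be pushed to $\infty$ without enlarging the distance to $M$, while the presence of $\tilde x + \mu v$ in the convex combination still drives $f$ to $-\infty$.
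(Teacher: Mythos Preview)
Your proof is correct and takes a genuinely different route from the paper. Both arguments first extract (from an unbounded minimizing sequence) a nonzero direction $s\in\rec(M)$ with $f^\infty(s)\le 0$ and then invoke condition \eqref{eq10} to obtain $\tilde x$ with $f(\tilde x+\lambda s)\to-\infty$. At this point the paper proceeds differently: it observes that the full line $L=\{\tilde x+\lambda s:\lambda\in\R\}$ is disjoint from $M$ (otherwise $\bar f=-\infty$), separates $M$ and $L$ by some $x_0^*\in\barc(M)\setminus\{0\}$, and then appeals to the paper's earlier structural results---Theorem \ref{thm5} (which forces $x_0^*\in\Int\barc(M)$ whenever $M$ has the strong separation property) together with Theorem \ref{thm8}---to conclude $\langle x_0^*,s\rangle<0$; since $\langle x_0^*,\cdot\rangle$ is bounded below on the line $L$, also $\langle x_0^*,s\rangle=0$, a contradiction. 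Your approach instead uses the strong separation property \emph{directly}: you separate $M$ from the sublevel set $D=\{f\le\bar f-1\}$, obtain $\dst(M;D)>0$ from Theorem \ref{thm4}, and then manufacture points $y_{\epsilon,\mu}=(1-\epsilon)x_0+\epsilon\tilde x+\mu v\in D$ lying within $\epsilon\|\tilde x-x_0\|$ of $M$ by the convex-combination trick. This is more elementary and self-contained, since it bypasses the barrier-cone machinery entirely; the paper's route, by contrast, is designed to exhibit Theorem \ref{thm9} as an application of the barrier-cone characterizations developed in Sections~\ref{sec:1}--\ref{sec:3}.
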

\begin{proof} Since $f$ is convex and lower semicontinuous, $\Sol(M; f)$ is a closed convex set. Suppose that $\Sol(M; f)$ is not compact or empty. Then, there exists a sequence $(x_n)\subset M$ such that  $\|x_n\|\to +\infty$ and
$$\lim_{n\to \infty} f(x_n)=\bar f.$$
By an argument analogous to the proof of Lemma \ref{lem5} (under condition $(C)$), we may assume that
$$\frac{x_n}{\|x_n\|}\to s\in S.$$
Take $x_0\in M$. By Lemma \ref{lem2} and Lemma \ref{lem3}, we have $s\in \rec(M)$, 
$$s_n:=\frac{x_n-x_0}{\|x_n-x_0\|}\to s,$$
and $x_n=x_0+t_ns_n$ with $t_n=\|x_n-x_0\|\to +\infty$.

Fix a number $\lambda>0$. For $n$ large enough, one has $\lambda<t_n$ and
$$\frac{f(x_0+\lambda s_n)-f(x_0)}\lambda\le \frac{f(x_0+t_ns_n)-f(x_0)}{t_n}=\frac{f(x_n)-f(x_0)}{t_n}.$$
Since $f(x_n)\to \bar f$, the right-hand side of the inequality tends to $0$ while the left-hand side tends to $\frac{f(x_0+\lambda s)-f(x_0)}\lambda$, when $n\to +\infty$. Consequently, 
$$\frac{f(x_0+\lambda s)-f(x_0)}\lambda\le 0;\; \forall\, \lambda>0,$$
and hence, $f^\infty(s)\le 0$. Because $f$ satisfies condition \eqref{eq10}, there exists $\tilde x\in \Dom f$ such that
\begin{equation}\label{eq14}
\lim_{\lambda\to\infty}f(\tilde x+\lambda s)=-\infty.
\end{equation}

We shall show that the following straight line
$$L=\{\tilde x+\lambda s\mid \lambda\in \R\}$$
does not intersect $M$. Assume the contrary. Let $\lambda_0\in \R$ such that $\tilde x+\lambda_0s\in M$. Since $s\in \rec(M)$, $\tilde x+\lambda s\in M$ for all $\lambda \ge \lambda_0$. This together with \eqref{eq14}  implies that $\bar f=-\infty$, contradicting the fact that $f$ is bounded below on $M$.

Since $L$ is convex and disjoint from $M$, there exists a vector $x_0^*\in \R^n\setminus\{0\}$ separating $L$ and $M$; that is to say,
$$\sup\{\<x_0^*, x\>\mid x\in M\}\le \inf\{\<x_0^*, y\>\mid y\in L\}.$$
Thus, $x_0^*\in \barc(M) \setminus\{0\}$. Since $M$ has the strong separation property, it follows from Theorem \ref{thm5} and Theorem \ref{thm8} that 
$$\lim_{\overset{x\in M}{\|x\|\to\infty}}\<x_0^*, x\>=-\infty.$$ 
Since $x_0+\lambda s\in M$, for all $\lambda>0$, it implies that $\<x_0^*,s\><0$. On the other hand, because $\<x_0^*, \cdot\>$ is bounded below on $L$, we have $\<x_0^*,s\>=0$. This contradiction completes the proof.  
 \end{proof} 
\begin{example} Let consider the problem $\mathcal{P}(M; f)$ with
$$M=\{(x, y)\in \R^2\mid y\ge x^2\},$$
and
$$f(x, y)=y+x^2,\; (x, y)\in \R^2.$$
The set $M$ has the strong separation property as shown in Example \ref{ex4}. The function $f$ is bounded below on $M$ by $0$. On the other hand, 
\begin{align*}
f^\infty(u, v)&=\lim_{\lambda\to +\infty}\frac{f((0,0)+\lambda(u,v))-f(0,0)}{\lambda}\\
&=\lim_{\lambda\to +\infty}\frac{\lambda v+\lambda^2 u^2}{\lambda}=\begin{cases}
+\infty,& u\neq 0,\\
v,& u=0.
\end{cases}
\end{align*}
Therefore,
$$C(f^\infty; 0)=\{(0, v)\mid v\le 0\}.$$
For every $(0,0)\neq (u,v)\in C(f^\infty; 0)$, that is, $u=0$ and $v<0$, we have
$$\lim_{\lambda \to +\infty} f((0,0)+\lambda(0, v))=-\infty.$$
Thus, $f$ satisfies  the condition \eqref{eq10}. By virtue of Theorem \ref{thm9},  $\Sol(M; f)$ is nonempty and compact. In fact, by solving directly we can derive the solution set $\Sol(M; f)=\{(0,0)\}$.
It should be noticed that the function $f$ is not coercive since $f^\infty(0, v)<0$ for all $v<0$.    
\end{example} 
\begin{example}\label{ex3} In Theorem \ref{thm9}, if $f$ is not coercive then the assumption that $M$ has the strong separation property is essential and cannot be dropped. Let consider the optimization problem $\mathcal{P}(M; f)$  with
$$M=\{(x,0)\mid x\in \R\}\subset \R^2,$$
and 
$$f(x,y)=\begin{cases}
\exp(-x)-\sqrt{xy},& \text{if } x\ge 0 \text{ and } y\ge 0,\\
+\infty,& \text{if } x<0 \text{ or } y<0.
\end{cases}$$
We can verify that $f$ is proper, convex and lower semicontinuous on $\R^2$. Besides, 
$$f^\infty((u,v))=\lim_{\lambda \to +\infty}\frac{f(\lambda u,\lambda v)-f(0,0)}{\lambda}=\begin{cases}
-\sqrt{uv},& \text{if } u\ge 0 \text{ and } v\ge 0,\\
+\infty,& \text{if } u<0 \text{ or } v<0.
\end{cases}$$
So, if $(0,0)\neq (u,v)\in C(f^\infty; 0)$ then $u\ge 0$, $v\ge 0$, $u+v>0$, and hence, by taking $(\tilde x, \tilde y)=(1,1)$, we have
$$\lim_{\lambda\to +\infty}f((\tilde x, \tilde y)+\lambda(u,v))=\lim_{\lambda\to +\infty} [\exp(-1-\lambda u)-\sqrt{(1+\lambda u)(1+\lambda v)}]=-\infty.$$
That means condition \eqref{eq10} holds. Furthermore, $f$ is bounded below (by $0$) on $M$. However, it is easy to see that $\bbar f=0$ and $\Sol(M; f)=\emptyset$. This happens because $M$ does not have the strong separation property and $f$ is not coercive. 
\end{example}
Sometimes, the constraint set $M$ is defined by a system of convex inequalities as follows:
\begin{equation}\label{eq13}
M=\{x\in \R^n\mid f_i(x)\le 0,\; 1\le i\le m\},
\end{equation}
where $f_i$, $1\le i\le m$, are convex functions on $\R^n$. In order for the set given in \eqref{eq13} to have the strong separation property, each constraint function is required to satisfy the following condition: 
\begin{equation}\label{eq9}
\forall\, 0\neq v\in C(f_i^\infty; 0), \forall\, x\in \Dom f_i, \ds\lim_{\lambda\to +\infty}f_i(x+\lambda v)=-\infty.
\end{equation}
 \begin{theorem}
Assume that $f_i: \R^n \to \R$, $1\le i\le m$, are convex functions satisfying condition \eqref{eq9}. Then the set  $M$ defined as \eqref{eq13} has the strong separation property.   
\end{theorem}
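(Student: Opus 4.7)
The plan is to reduce the claim to a verification of the barrier-cone characterization in Corollary \ref{cor3}: since $\R^n$ is finite-dimensional and each $f_i$ is continuous (being real-valued and convex on $\R^n$), the set $M=\bigcap_{i=1}^m f_i^{-1}((-\infty,0])$ is closed and convex, and it suffices to prove
$$\barc(M)=\Int\barc(M)\cup\{0\}.$$
I will argue by contradiction, suppose there exists $x^*\in\barc(M)\setminus\Int\barc(M)$ with $\|x^*\|=1$, and exhibit $\tilde x\in\R^n$ violating the barrier property, forcing $x^*=0$.

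First, the failure of condition (b) in Theorem \ref{thm8} produces a sequence $(x_k)\subset M$ with $\|x_k\|\to\infty$ and $\langle x^*,x_k\rangle\to\beta:=\sigma_M(x^*)\in\R$. Because the unit sphere of $\R^n$ is compact, I pass to a subsequence with $x_k/\|x_k\|\to s\in S^{n-1}$, and Lemma \ref{lem2} gives $s\in\rec(M)$. From $\langle x^*,x_k\rangle=\|x_k\|\langle x^*,x_k/\|x_k\|\rangle\to\beta$ together with $\|x_k\|\to\infty$, the case $\langle x^*,s\rangle<0$ is ruled out (the left side would tend to $-\infty$), while $\langle x^*,s\rangle>0$ is ruled out because it contradicts $x^*\in\barc(M)$ along the recession direction $s$. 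Thus $\langle x^*,s\rangle=0$.

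The crux is to combine $s\in\rec(M)$ with condition \eqref{eq9} to globalize $x^*$'s bound. Fix any $x_0\in M$: the inclusion $x_0+\lambda s\in M$ for $\lambda\ge 0$ gives $f_i(x_0+\lambda s)\le 0$, and dividing by $\lambda$ and passing to the limit yields $f_i^\infty(s)\le 0$, i.e., $s\in C(f_i^\infty;0)$ for every $i$. Since $s\neq 0$, hypothesis \eqref{eq9} then asserts that, for every $\tilde x\in\R^n$ and every $i$,
$$\lim_{\lambda\to+\infty}f_i(\tilde x+\lambda s)=-\infty.$$
Hence for each fixed $\tilde x\in\R^n$ there exists $\lambda_0>0$ with $f_i(\tilde x+\lambda s)\le 0$ for all $i$ and all $\lambda\ge\lambda_0$, i.e., $\tilde x+\lambda s\in M$. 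Evaluating and using $\langle x^*,s\rangle=0$ gives
$$\langle x^*,\tilde x\rangle=\langle x^*,\tilde x+\lambda s\rangle\le\sigma_M(x^*)=\beta$$
for every $\tilde x\in\R^n$, so $x^*$ is bounded above on all of $\R^n$. This forces $x^*=0$, contradicting $\|x^*\|=1$ and completing the argument via Corollary \ref{cor3}.

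The main obstacle I anticipate is the third step: once $\langle x^*,s\rangle=0$, neither $x^*$'s behaviour on $M$ nor the recession-cone condition alone provides a contradiction. The genuinely useful input is \eqref{eq9}, which must be invoked to ensure that the translate $\tilde x+\lambda s$ of an arbitrary point eventually lands in $M$; this is what allows the one-sided bound $\langle x^*,\cdot\rangle\le\beta$ to propagate from $M$ to the whole space and kill $x^*$. Verifying that $s$ actually lies in each $C(f_i^\infty;0)$ (so that \eqref{eq9} is applicable) is straightforward but essential, and it is the single place where the definition of $M$ as an intersection of sublevel sets is used.
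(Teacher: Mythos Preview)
Your proof is correct but follows a genuinely different route from the paper's argument. The paper proceeds directly: given a disjoint closed convex set $D$ that cannot be strongly separated from $M$, it invokes Theorem~\ref{thm2} (valid since every closed set in $\R^n$ is locally compact) to produce a nonzero vector $v\in\rec(M)\cap\rec(D)$; then, exactly as you do, it shows $f_i^\infty(v)\le 0$ for each $i$, and applies condition \eqref{eq9} at a point $y_0\in D$ to conclude $y_0+\lambda v\in M$ for some $\lambda>0$, contradicting $M\cap D=\emptyset$ because also $y_0+\lambda v\in D$.

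By contrast, you route the argument through the barrier-cone characterization of Corollary~\ref{cor3}, extracting the recession direction $s$ from a maximizing sequence for a boundary functional $x^*$, and then applying \eqref{eq9} at \emph{every} point of $\R^n$ to force $\langle x^*,\cdot\rangle\le\beta$ globally. The paper's approach is shorter and more self-contained (it needs only the classical Dieudonn\'e-type Theorem~\ref{thm2}), while yours illustrates the structural criterion \eqref{eq11} in action and shows that condition \eqref{eq9} is strong enough to verify it directly. The common core of both arguments is the same: a nonzero $s\in\rec(M)$ must lie in each $C(f_i^\infty;0)$, and then \eqref{eq9} lets one translate an arbitrary point into $M$ along $s$.
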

\begin{proof}
Suppose the contrary. Let $D\subset \R^n$ be a closed convex subset, disjoint from $M$, but cannot be strongly separated from $M$. It follows from Theorem~\ref{thm2} that, there exists $0\neq v\in \rec(M)\cap \rec(D)$. Take $x_0\in M$ and $y_0\in D$. Since $x_0+\lambda v\in M$ for all $\lambda>0$, we have
$$f_i^\infty(v)=\lim_{\lambda\to +\infty}\frac{f_i(x_0+\lambda v)-f_i(x_0)}{\lambda}\le     \lim_{\lambda\to +\infty}\frac{-f_i(x_0)}{\lambda}=0;\; 1\le i\le m.$$
Because $f_i$ satisfies condition \eqref{eq9} we have
$$\lim_{\lambda\to +\infty}f_i(y_0+\lambda v)=-\infty; \; 1\le i\le m.$$
Consequently, there exists $\lambda>0$ such that $f_i(y_0+\lambda v) \le 0$, $1\le i\le m$, or
$y_0+\lambda v\in M$. On the other hand, since $v\in \rec(D)$, $y_0+\lambda v\in D$. Thus, $M\cap D\neq \emptyset$, contradicting the fact that $M$ and $D$ are disjoint.   
\end{proof}
\begin{corollary} Let $f_i:\R^n\to \R$, $1\le i\le m$, be convex functions satisfying condition \eqref{eq9}, $f_0: \R^n \to \R$ be a proper, convex and lower semicontinuous function satisfying condition \eqref{eq10}. Then the solution set of the following optimization problem
$$\mathcal{P}(f_1,f_2,\ldots,f_m; f_0):\begin{cases}
f_0(x)\to \inf,\\
x\in \R^n,\\
f_i(x)\le 0,\, 1\le i\le m
\end{cases}$$
is nonempty and compact.
\end{corollary}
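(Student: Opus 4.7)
The plan is a direct two-step reduction using the two results that immediately precede the corollary. First, define the feasible set $M := \{x \in \R^n \mid f_i(x) \le 0,\, 1 \le i \le m\}$; it is closed and convex as a finite intersection of closed convex sublevel sets of the continuous convex functions $f_i$. Since each $f_i$ satisfies condition \eqref{eq9} by hypothesis, the theorem stated immediately before this corollary applies directly and yields that $M$ has the strong separation property.

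Second, apply Theorem \ref{thm9} to the pair $(M, f_0)$. By hypothesis, $f_0$ is proper, convex, and lower semicontinuous and satisfies condition \eqref{eq10}, while $M$ has the strong separation property by the first step. Provided $f_0$ is bounded below on $M$ (which is the standing assumption of Theorem \ref{thm9}; without it $\bar f = -\infty$ and the claim becomes vacuous), Theorem \ref{thm9} delivers at once that $\Sol(\mathcal{P}(f_1,\ldots,f_m; f_0)) = \Sol(M; f_0)$ is nonempty and compact, which is exactly the assertion.

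No new construction is required; the whole proof is a clean invocation of the two preceding results, with only the matching of hypotheses to verify. The only delicate point is the implicit boundedness-below hypothesis on $f_0$ inherited from Theorem \ref{thm9}: if one wished to dispense with it, one would have to replay the end of Theorem \ref{thm9}'s proof, showing that an unbounded-below minimizing sequence escapes to infinity in a direction $s \in \rec(M) \cap C(f_0^\infty;0)\setminus\{0\}$ and then use \eqref{eq10} together with Theorems \ref{thm5}--\ref{thm8} applied to the strongly-separating $M$ to place a whole ray inside $M$ along which $f_0 \to -\infty$. Since this reasoning is already encapsulated in Theorem \ref{thm9}, the cleanest route is to cite Theorem \ref{thm9} as a black box.
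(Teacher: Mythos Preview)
Your two-step reduction is exactly what the paper intends: the corollary is stated without proof precisely because it is meant to follow by combining the immediately preceding theorem (giving $M$ the strong separation property) with Theorem~\ref{thm9}. So the overall strategy is correct and matches the paper.

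You are also right to flag the bounded-below hypothesis: Theorem~\ref{thm9} explicitly assumes that $f$ is bounded below on $M$, and the corollary as stated omits this. However, two points in your last paragraph need correction. First, the conclusion does not become ``vacuous'' when $\bar f_0=-\infty$; it becomes \emph{false}, since then $\Sol(M;f_0)=\emptyset$ while the corollary asserts nonemptiness. Second, the workaround you sketch---replaying the end of the proof of Theorem~\ref{thm9} to find a ray in $M$ along which $f_0\to-\infty$---does not produce a contradiction: it merely reconfirms that $\bar f_0=-\infty$, which is exactly the scenario you are trying to rule out. In fact the hypothesis cannot be dispensed with. Take $n=2$, $m=1$, and
\[
f_1(x,y)=f_0(x,y)=x^2-y.
\]
Then $M=\{(x,y)\mid y\ge x^2\}$ is the set of Example~\ref{ex4}, $f_1$ satisfies \eqref{eq9} and $f_0$ satisfies \eqref{eq10} (in both cases $C(f^\infty;0)=\{(0,v)\mid v\ge 0\}$ and $f((x,y)+\lambda(0,v))=x^2-y-\lambda v\to-\infty$ for $v>0$), yet $f_0(0,n)=-n\to-\infty$ on $M$, so $\Sol(M;f_0)=\emptyset$. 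Hence the corollary, read literally, is missing the assumption that $f_0$ is bounded below on $M$; once that is added, your proof is complete and coincides with the paper's intended argument.
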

\begin{remark}  From assumptions imposed on convex functions we observe that \eqref{eq8} $\Rightarrow$ \eqref{eq9} and \eqref{eq9} $\Rightarrow$ \eqref{eq10}. However, the converses are not true. For example, the function $f$ given in Example \ref{ex3} satisfies \eqref{eq10}, while, by taking $(1,0)\in C(f^\infty; 0)$ and $(x, y)=(0,0)\in \Dom f$ we have
$$\lim_{\lambda\to +\infty} f((0,0)+\lambda (1,0))=0>-\infty.$$
Thus, $f$ does not satisfy condition \eqref{eq9}.  Also, it is not hard to verify that the following function
$$f(x)=\begin{cases}
-\sqrt x,& \text{ if } x\ge 0,\\
+\infty, &\text{ if } x<0
\end{cases}$$
is proper, convex, lower semicontinuous on $\R$ satisfying condition \eqref{eq9}, but it is not coercive.
\end{remark}
\section{Conclusion} In this paper we have studied strong separation of convex sets and characterization of sets having the strong separation property by using results on the barrier cones of convex sets. We provide a full description of the interior of the barrier cone of a convex set, prove a new strong separation theorem under an assumption on the barrier cones  instead of local compactness or weak compactness assumptions on the sets. We also develop some necessary and/or sufficient conditions for a closed convex set to have the strong separation property. The non-emptiness and compactness of the solution set in a convex optimization problem whose constraint set has the strong separation property are also considered in the paper.

\begin{acknowledgements}
The author would like to thank the referees for their helpful comments and valuable suggestions.\end{acknowledgements}

\end{document}